\documentclass[12pt]{amsart}
\usepackage[T1]{fontenc}
\usepackage{lmodern, microtype}
\usepackage{amssymb, mathtools}
\usepackage[margin=1in]{geometry}
\usepackage{enumitem, verbatim}
\usepackage[hidelinks]{hyperref}
\newtheorem{theorem}{Theorem}[section]
\newtheorem{lemma}[theorem]{Lemma}
\newtheorem{corollary}[theorem]{Corollary}
\newtheorem{proposition}[theorem]{Proposition}
\newtheorem{question}[theorem]{Question}
\theoremstyle{definition}
\newtheorem{definition}[theorem]{Definition}
\theoremstyle{remark}
\newtheorem{remark}[theorem]{Remark}

\numberwithin{equation}{section}
\DeclareMathOperator{\Lip}{Lip}
\DeclareMathOperator{\conv}{conv}
\DeclareMathOperator{\supp}{supp}

\DeclareMathOperator*{\argmin}{arg\,min}
\newcommand{\R}{\mathbb R}
\newcommand{\norm}[1]{\lVert#1\rVert}
\newcommand{\abs}[1]{\lvert#1\rvert}
\newcommand{\pos}[1]{(#1)^+}
\newcommand{\Pfin}{\mathcal P_{\mathrm{fin}}}
\title[Lipschitz clustering in normed spaces]
{Lipschitz clustering in normed spaces\\and barycentric spaces}
\author{Leonid V. Kovalev}
\date{}
\subjclass[2020]{Primary 51F30;
Secondary 46B20, 54B20, 54C15, 54E40}
\keywords{Finite subset space, normed space, Lipschitz retraction, barycenter, clustering, 
absolute Lipschitz retract}
\hypersetup{pdftitle={Lipschitz clustering in normed spaces and barycentric spaces},
pdfauthor={Leonid V. Kovalev}}
\begin{document}
\begin{abstract}
For every normed space $X$ and integers $1\le k<n$, we construct a
Lipschitz retraction from $X(n)$ onto $X(k)$ with constant at most
$2+8n(n-1)$, independently of the dimension of $X$.
Here $X(n)$ consists of the nonempty subsets of $X$ with at most $n$
points, equipped with the Hausdorff metric. For the consecutive case $k=n-1$, a
separate construction gives the sharper asymptotic bound $O(n\log n)$. 
These results extend to barycentric metric spaces and settle some open problems on Lipschitz retractions. 
\end{abstract}
\maketitle

\section{Introduction}\label{sec:introduction}

Clustering reduces a finite configuration to a simpler one by merging
nearby points. Its Lipschitz version asks that small perturbations of
the input produce controlled perturbations of the output. We study
retractions that reduce an $n$-point configuration to at most $k$
representatives, where $1\le k<n$, and fix configurations already
having at most $k$ points. Allowing fewer than $k$ representatives
accommodates simultaneous mergers. Our main setting is that of normed
spaces, with estimates independent of dimension.

For a metric space $X$ and an integer $n\ge1$, write
$X(n)=\{A\subset X:1\le |A|\le n\}$, with the Hausdorff metric
\[
 d_H(A,B)=\max\left\{
 \max_{a\in A}\min_{b\in B}d(a,b),\,
 \max_{b\in B}\min_{a\in A}d(a,b)\right\}.
\]
Following~\cite{KovalevLCP}, $X$ has the \emph{Lipschitz clustering
property} if there is a Lipschitz retraction
$X(n)\to X(n-1)$ for every $n\ge2$. For $1\le k<n$, set
\[
 \Lambda_{n,k}(X)=\inf\{\Lip(r) \ :\  r\colon X(n)\to X(k)
                  \text{ is a Lipschitz retraction}\},
\]
with infimum $+\infty$ if no such retraction exists, and write
$\Lambda_n(X)=\Lambda_{n,n-1}(X)$. If $\Lambda_n(X)<\infty$ for all $n$, 
composing consecutive retractions yields $\Lambda_{n,k}(X)<\infty$ for all $k$,
but with weak quantitative control. Our first theorem avoids this by 
constructing $r\colon X(n)\to X(k)$ directly.

The Lipschitz clustering property was proved for Hilbert spaces in~\cite{Kovalev2016}
and for Hadamard spaces in~\cite{BacakKovalev}. It also holds for
finite-dimensional normed spaces: by John's theorem, a normed space of
dimension $d$ has Banach--Mazur distance at most $\sqrt d$ from $\ell_2^d$,
so the Euclidean case applies, with constants that grow with $d$.
For arbitrary normed spaces, Akofor~\cite{Akofor} obtained Lipschitz
consecutive retractions for $n=2,3$, and consecutive retractions
that are H\"older continuous on bounded sets for general $n$.
He also proved the existence of Lipschitz retractions
$X(n)\to X(1)$ and $X(n)\to X(2)$ for arbitrary
$n>2$~\cite[Theorem 4.11]{Akofor}. The case of general normed spaces was
posed as an open problem in~\cite[Question 3.4]{Kovalev2016}; see
also~\cite[\S4]{KovalevLCP}, and~\cite[Remark 3.5]{BacakKovalev} for
strictly and uniformly convex spaces.

All normed spaces below are real, and completeness is not assumed;
complex normed spaces are covered by regarding them as real spaces. We write $\conv A$ for the convex hull of $A$.

\begin{theorem}\label{thm:direct}
For every normed space $X$ and integers $1\le k<n$, there is a
retraction $r\colon X(n)\to X(k)$ such that
\[
 \Lip(r)\le D_n:=2+8n(n-1),\quad \text{and} \quad
 r(A)\subset\conv A \text{ for all } A\in X(n).
\]
Moreover,
\begin{equation}\label{eq:tree-main-movement}
 d_H(r(A),A)\le(n-1)s_k(A),
\end{equation}
where $s_k(A)$ is the smallest $s\ge0$ such that the graph on $A$
joining all pairs at distance at most $s$ has at most $k$ components
(Definition~\ref{def:cut-height}).
\end{theorem}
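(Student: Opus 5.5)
We will build $r$ from the single-linkage cluster tree of $A$, smoothing the merge thresholds so that the output depends Lipschitz-continuously on $A$. For $A\in X(n)$ and $s\ge0$, let $G_s(A)$ be the graph on $A$ joining pairs at distance at most $s$, and let $c_s(A)$ be its number of components. Then $s_k(A)$ is the smallest $s$ with $c_s(A)\le k$. The first step is to record two stability facts.

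\begin{enumerate}[label=(\roman*)]
\item $s_k$ is $2$-Lipschitz with respect to $d_H$. If $d_H(A,B)\le\delta$, every edge of $G_s(A)$ yields a chain in $B$ with gaps at most $s+2\delta$, so $c_{s+2\delta}(B)\le c_s(A)$.
\item $s_k(A)=0$ exactly when $|A|\le k$. In that case the construction below must return $A$ itself, which gives the retraction property.
\end{enumerate}

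\textbf{Construction.} Fix $t=s_k(A)$. We form the components of $G_t(A)$. There are at most $k$ of them, and each has diameter at most $(n-1)t$, since a path in the component uses at most $n-1$ edges. A naive rule would replace each component $C$ by one point of $\conv C$, for example its centroid. That rule satisfies $r(A)\subset\conv A$ and \eqref{eq:tree-main-movement}, because every point of $C$ is within $\operatorname{diam} C\le(n-1)t$ of that point, but it is discontinuous.

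To remove the discontinuity, we interpolate over the merging scale. For each pair $a,b\in A$ we introduce a weight $\phi(\abs{a-b}/t)$, where $\phi$ is piecewise linear, equal to $1$ on $[0,1/2]$ and $0$ on $[1,\infty)$. The representative of each point $a$ is then defined by iterated weighted averaging along the graph: one mean-shift style step $a\mapsto\sum_b w_{ab}b/\sum_b w_{ab}$ with these weights, repeated $n-1$ times. Averages stay in $\conv A$. Each step is Lipschitz, with a constant of order $n$ relative to $t$-normalised distances, and the relevant quantity is its product with the factor $1/t$ coming from $\phi'$.

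\textbf{Main obstacle.} This is ensuring that the output has at most $k$ points and equals $A$ when $|A|\le k$, while keeping the Lipschitz constant polynomial rather than exponential. My plan is to avoid compounding by using a different scheme. Points of $A$ that lie in the same $G_{t/2}$-component are collapsed exactly, to a convex combination weighted by $\phi$. Interpolation is used only across scales between $t/2$ and $t$, and there is a single parameter $\lambda=\lambda(A)\in[0,1]$ computed from $s_k$ and $s_{k+1}$, each of which is $2$-Lipschitz by (i).

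The count of $8n(n-1)$ arises from this bookkeeping. There are at most $n(n-1)/2$ pairs, each contributes a derivative bound of order $1/t$ times a displacement of order $(n-1)t$, and the factors of $2$ come from (i) together with the Hausdorff matching. The extra additive $2$ accounts for points that are not moved.

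To verify the Lipschitz bound, we compare $A$ and $B$ with $d_H(A,B)=\delta$. If $\delta\ge t/8$, the bound follows from \eqref{eq:tree-main-movement} and the triangle inequality:
\[
d_H(r(A),r(B))\le (n-1)(s_k(A)+s_k(B))+\delta=O(n\delta).
\]
Otherwise $A$ and $B$ have matched cluster structures, and we estimate the difference term by term.
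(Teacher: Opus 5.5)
There is a genuine gap. The retraction is never actually specified, and the ingredients you do specify cannot produce an output with at most $k$ points. Since $t=s_k(A)$ is a merge threshold, some pair at distance exactly $t$ must be merged, yet your weight $\phi(|a-b|/t)$ vanishes for $|a-b|\ge t$. For $A=\{0,1,10\}\subset\mathbb R$ and $k=2$ one has $t=1$ and all off-diagonal weights are $0$. Any averaging built from these weights, iterated or not, therefore returns $A$, which has three points. In general the graph of positive weights is $G_s(A)$ for some $s<t$, which has more than $k$ components by the minimality in the definition of $s_k$. The same objection applies to collapsing $G_{t/2}$-components. Weighted averages also do not in general produce exact coincidences, so nothing in the scheme enforces $|r(A)|\le k$. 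Finally, the count offered for $8n(n-1)$ is not a derivation: $n(n-1)/2$ pairs each contributing $O(n-1)$ gives order $n^3$, so the constant is read off from the statement rather than obtained. The first case of your Lipschitz check, $\delta\ge t/8$, is fine once \eqref{eq:tree-main-movement} is known.

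The decisive step is also false: $\delta<t/8$ does not give matched cluster structures. Let $k=2$, $A=\{0,1,2\}$, $B=\{0,1,2+\delta\}$ and $B'=\{-\delta,1,2\}$. These sets are pairwise within Hausdorff distance $\delta$, and all of them have $s_2=1$ and $s_3=0$, so no parameter built from $s_k$ and $s_{k+1}$ distinguishes them. Yet the components of $G_1$ are $\{\{0,1,2\}\}$, $\{\{0,1\},\{2+\delta\}\}$ and $\{\{-\delta\},\{1,2\}\}$ respectively; the near-tie is between $s_2$ and $s_1$. The whole difficulty is to collapse each component at the cut height exactly while passing continuously through such changes of the single-linkage hierarchy, and your outline has no mechanism for this. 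The paper's idea is to evaluate the harmonic interpolation $F_A$ of the points on the single-linkage tree $T_A$ at height $s_k(A)$. All leaves of a component share the cut point, so collapse is automatic, and competing trees at a tie differ only by short edges. The Lipschitz bound then comes from four ingredients. First, the maximum-principle bound $\|q_i\|\le m-1$ on leaf slopes. Second, the comparison identity \eqref{eq:tree-variation}, which expresses $F'-F$ through height changes $h'-h$ and remains valid after contracting zero-length edges. Third, integration along the segment from $A$ to $f(A)$, subdivided where the order of the pairwise distances is constant, which gives $1+4m(m-1)$. Fourth, the common-image lemma, which handles sets of different cardinalities and supplies the factor $2$.
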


For the consecutive case $k=n-1$ a different construction achieves a Lipschitz constant of order $O(n\log n)$.
Here and below, $\log$ denotes the natural logarithm.

\begin{theorem}\label{thm:main}
For every normed space $X$ and every $n\ge2$, there is a retraction
$r\colon X(n)\to X(n-1)$ with
\begin{equation}\label{eq-theorem-main}
 \Lip(r)\le C_n:=1+48n\log(n+1),
 \qquad r(A)\subset\conv A.
\end{equation}
For $n=2$, the midpoint retraction has Lipschitz constant one.
\end{theorem}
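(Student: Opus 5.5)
The plan is to build $r$ by merging only the closest pair, using weights that vary continuously so the output does not jump. For $A\in X(n)$ with $|A|=n$, let $\delta(A)=\min\{\norm{a-b}:a\ne b\in A\}$ be the minimal separation. Any $A$ with $|A|\le n-1$ must be fixed, so sets with fewer than $n$ points are left unchanged. For a full $n$-point set, each pair $\{a,b\}$ is collapsed to its midpoint $\tfrac{a+b}{2}$ (for $n=2$ this is the whole map, and $\norm{\tfrac{a+b}2-\tfrac{a'+b'}2}\le d_H$ after matching gives constant one).

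For general $n$ we cannot simply merge the closest pair, because the closest pair is not stable under perturbation. Instead we use a soft selection. For each pair $p=\{a,b\}$ define the weight
\[
w_p(A)=\pos{\lambda\delta(A)-\norm{a-b}},
\]
with a parameter $\lambda>1$ to be fixed. The output is the set obtained by replacing each of $a,b$ with a point moving continuously from the original location toward the midpoint. Concretely, let $t_p=\min(1,\ w_p/\max_q w_q)$, and move $a$ and $b$ toward each other by a fraction depending on $t_p$. We need exactly one pair, namely the one with $t=1$, to actually coincide. The natural fix is the cluster approach: use the thresholds $\lambda^j\delta(A)$ for $j=0,\dots,\lceil\log_\lambda n\rceil$. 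Among these there is a scale where the gap graph (edges of length at most the threshold) does not change over a multiplicative window. At that scale, each component with more than one point contracts toward a convex combination of its points, and this can be done Lipschitz in $A$.

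The key steps are as follows.
\begin{enumerate}
\item Choose $\lambda=1+1/\log(n+1)$-type geometric scales and, by pigeonhole over at most $n-1$ edge lengths, find a scale window free of pairwise distances. The choice of window is made by a continuous partition of unity over the candidate scales.
\item Within a window, contract each component $G$ by the affine map $x\mapsto (1-\theta)x+\theta c_G$, where $c_G$ is a point of $\conv G$. The barycenter of $G$ is Lipschitz, and $\theta$ is a Lipschitz function of scale/$\delta$ that equals $1$ when $\delta$ is well inside the window.
\item Check the retraction property. If $|A|<n$, then $\delta=0$, and we must define $r(A)=A$ consistently. This requires showing that as two points approach each other, the output tends to the set with those two points merged and everything else fixed. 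This is where the multiscale window must be chosen with its lowest admissible scale tied to $\delta$.
\item Bound the Lipschitz constant. Each point moves by at most (component diameter) $\le n\cdot$(scale) $\lesssim n\lambda^{J}\delta$. The dependence on $A$ of the gluing coefficients has gradient $O(\log n/\text{scale})$ per window, and the two combine to $O(n\log n)$.
\end{enumerate}

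The main obstacle is step 4, together with the consistency in step 3. One must control $d_H(r(A),r(B))$ when $A,B$ have different combinatorics, for example when $B$ has fewer points or the matching between $A$ and $B$ is not a bijection. I would first reduce to the case $d_H(A,B)$ small compared with $\delta(A)$, by interpolation along segments together with the fact that $X(n)$ is geodesic-like for the Hausdorff metric in normed spaces (move points linearly). Then the matching between $A$ and $B$ is a bijection, and all quantities vary Lipschitz-wise. The remaining case is small $\delta$, which is handled by comparing both sides with the merged set, using the movement bound $d_H(r(A),A)\le C n\,\delta(A)$.
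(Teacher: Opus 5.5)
There is a genuine gap. The proposal lists two mechanisms and abandons the first (soft pair selection). It never turns the second (multiscale windows glued by a partition of unity) into a definite map, and the numbers it relies on do not work.

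\emph{The pigeonhole in step~1 fails as stated.} With $\lambda=1+1/\log(n+1)$ and scales $\lambda^j\delta(A)$, $0\le j\le\lceil\log_\lambda n\rceil$, there are only about $\log^2 n$ windows. The components of the distance graph change at up to $n-1$ single-linkage merge heights, or at $\binom n2$ values if you insist on avoiding all pairwise distances. Collinear points whose consecutive gaps increase by the factor $\lambda^{1/2}$ put a merge height in every one of your windows once $n$ is large.

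\emph{Repairing the count breaks the constant.} Forcing a free window requires about $n$ windows. With your $\lambda$, the top scale $\lambda^n\delta$ is then superpolynomial in $n$ (roughly $e^{n/\log(n+1)}\delta$), so the movement bound $n\lambda^J\delta$ is useless. Taking $\lambda-1\sim1/n$ keeps $\lambda^n$ bounded, but then each window has relative width $\sim1/n$. A Lipschitz cutoff certifying that a window at scale $\sigma$ is free then has gradient of order $n/\sigma$. Multiplied by the $O(n\sigma)$ spread between the contracted configurations being glued, this bookkeeping gives order $n^2$, not $n\log n$. The claim in step~4 that the gluing coefficients have gradient $O(\log n/\sigma)$ is not supported by anything in the construction.

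\emph{Step~2 does not help.} The windows are multiples of $\delta$, so a $\theta$ depending on $\sigma/\delta$ is constant on each window. A blend of several windows keeps a collision only if that pair is fully collapsed ($\theta=1$) in every window of positive weight. All the continuity therefore has to come from the partition of unity.

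The missing idea is a way to force a collision that varies Lipschitz-continuously with the data, with no combinatorial selection at all. The paper works at the single scale $s=\delta_n(A)$. After embedding $X$ in $\ell^\infty(B_{X^*})$, it applies coordinatewise the proximal map $P_{s/2}$ of $\Psi(u)=\sum_{i<j}\abs{u_i-u_j}$. By Lemma~\ref{lem:scalar}, this map:
\begin{enumerate}
\item collapses every pair at distance at most $s$ in all coordinates at once;
\item is $1$-Lipschitz in the data and $(n-1)$-Lipschitz in the scale;
\item moves points by at most $(n-1)s/2$.
\end{enumerate}
The outputs $y_i$ need not lie in $X$, so each is replaced by the average of the $x_j$ with weights $\varphi(\norm{y_i-x_j}/s)$, where $\varphi(r)=e^{\lambda\pos{b-r}}-1$ and $\lambda=\log(n+1)/n$. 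Equal $y_i$ give equal weight rows, so the collision survives and the output lies in $\conv A$. The logarithmic derivative $\lambda$ of $\varphi$ keeps the $\ell^1$ change of the normalized weights at $O(\log n\cdot\varepsilon/s)$. The weights are supported on points within $O(ns)$ of $x_i$, and this product is the source of $n\log n$.

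Your treatment of the degenerate case agrees with the paper: use the movement bound when both separations are at most $2\varepsilon$, otherwise match bijectively as in Lemma~\ref{lem:matching}. No interpolation along segments is needed.
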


One has $C_n<D_n$ exactly when $n\ge19$, so for $k=n-1$
Theorem~\ref{thm:main} improves on Theorem~\ref{thm:direct} from $n=19$ on.
The order $n\log n$ is optimal up to the logarithm: every nonzero normed
space satisfies $\Lambda_n(X)\ge n-3$ (Proposition~\ref{prop:lower}, based
on~\cite[Theorem 5.1]{AkoforKovalev}).

The construction in Theorem~\ref{thm:direct} harmonically interpolates the input vectors on their
single-linkage tree and evaluates the interpolation at height $s_k(A)$. 
The single-linkage hierarchy is the classical hierarchical clustering
obtained from a minimum spanning tree~\cite{GowerRoss,JardineSibson}. Carlsson and M\'emoli~\cite{CarlssonMemoli} 
characterized it axiomatically, in contrast with Kleinberg's impossibility theorem for flat 
clustering~\cite{Kleinberg}, and proved that it is stable under perturbations of
the metric. This stability enters our proof through the fact that all
merge heights move at most twice as fast as the input points. Lipschitz continuity of
algorithms has recently been studied systematically in theoretical
computer science, for example for minimum spanning trees~\cite{KumabeYoshida}.

For later use, set
\begin{equation}\label{eq:combined}
 B_{n,k}=D_n\quad(k\le n-2),\qquad
 B_{n,n-1}=\min\{D_n,C_n\}\quad(n\ge3),\qquad B_{2,1}=1.
\end{equation}
Thus $B_{n,n-1}=D_n$ for $3\le n\le18$ and $B_{n,n-1}=C_n$ for $n\ge19$.
By Theorems~\ref{thm:direct} and~\ref{thm:main}, and the midpoint map
for $n=2$, every normed space admits convex-hull-preserving retractions
$X(n)\to X(k)$ with constant $B_{n,k}$. All these constants are
independent of dimension.

The above results have applications beyond normed spaces. 
Let $\Pfin(X)$ be the finitely supported probability measures on a metric space $X$,
with the $1$-Wasserstein distance $W_1$. A \emph{$K$-Lipschitz
barycenter} (Definition~\ref{def:barycenter}) is a map $\beta\colon \Pfin(X)\to X$ with
$\beta(\delta_x)=x$ and
$d(\beta(\mu),\beta(\nu))\le KW_1(\mu,\nu)$.

\begin{theorem}\label{thm:barycenter-intro}
If a metric space $X$ admits a $K$-Lipschitz barycenter, then
$\Lambda_{n,k}(X)\le K B_{n,k}$, $1\le k<n$. 
Every complete metric space with a conical geodesic bicombing
(Definition~\ref{def:conical}),
including every complete Busemann-convex space and every Hadamard
space, satisfies these bounds with $K=1$.
\end{theorem}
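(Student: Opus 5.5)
The plan is to transfer the normed-space retractions to $X$ through the barycenter, by embedding $X$ isometrically into a normed space. Let $E$ be the Arens--Eells (Lipschitz-free) space of $X$, with a base point $x_0$ fixed. The map $\iota(x)=\delta_x-\delta_{x_0}$ is an isometric embedding of $X$ into $E$. The key point is that $W_1$ on $\Pfin(X)$ coincides with the Arens--Eells norm: for $\mu,\nu\in\Pfin(X)$ we have $W_1(\mu,\nu)=\norm{\mu-\nu}_E$ by Kantorovich--Rubinstein duality. Finite convex combinations $\sum_i\lambda_i\iota(x_i)$ correspond exactly to the measures $\mu=\sum_i\lambda_i\delta_{x_i}\in\Pfin(X)$, after subtracting $\delta_{x_0}$. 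So on the union of the convex hulls of finite subsets of $\iota(X)$ we can define
\[
 \pi\Bigl(\sum_i\lambda_i\iota(x_i)\Bigr)=\beta\Bigl(\sum_i\lambda_i\delta_{x_i}\Bigr),
\]
and this map is $K$-Lipschitz from the norm of $E$ to $X$. It satisfies $\pi(\iota(x))=x$.

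Now fix $1\le k<n$ and let $R\colon E(n)\to E(k)$ be the convex-hull-preserving retraction with constant $B_{n,k}$. This comes from Theorem~\ref{thm:direct}, Theorem~\ref{thm:main}, or the midpoint map, applied to the normed space $E$; completeness is not needed. Define $r(A)=\pi(R(\iota(A)))$ for $A\in X(n)$, with $\pi$ applied pointwise. This is well defined because $R(\iota A)\subset\conv\iota(A)$, which lies in the domain of $\pi$. Since $\iota$ is an isometry, the induced map on finite subsets is isometric for $d_H$. Applying a $K$-Lipschitz map pointwise to a finite set is $K$-Lipschitz for $d_H$, and it does not increase cardinality. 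Hence $r(A)\in X(k)$ and $\Lip(r)\le K B_{n,k}$. If $\abs{A}\le k$, then $R$ fixes $\iota(A)$, and $\pi\circ\iota=\mathrm{id}$ gives $r(A)=A$. Therefore $\Lambda_{n,k}(X)\le KB_{n,k}$.

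The step that needs the most care is that $\pi$ is genuinely well defined and Lipschitz. A point of $\conv\iota(A)$ may a priori have several convex representations. These all give the same element of $E$, and hence the same measure, because the $\delta_x$ are linearly independent in $E$. The Lipschitz bound then rests on the identity $\norm{\mu-\nu}_E=W_1(\mu,\nu)$ for finitely supported probability measures. This is standard duality for finite supports: the Arens--Eells norm of a finitely supported zero-mass measure is its optimal transport cost.

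For the second assertion, we need a $1$-Lipschitz barycenter on every complete space with a conical geodesic bicombing. I would cite the known construction of Descombes (and, for Busemann and CAT(0) spaces, Navas and Es-Sahib--Heinich). There one defines barycenters of uniform measures on $m$-tuples inductively, by iterating the bicombing and passing to a limit, which uses completeness. One shows it is $1$-Lipschitz for the $\ell_1$-type matching distance, and extends by density and symmetrization to rational weights and then all weights in $\Pfin(X)$. The result is $1$-Lipschitz with respect to $W_1$. Busemann-convex spaces and Hadamard spaces carry such a bicombing, given by their geodesics, so $K=1$ applies in those cases.
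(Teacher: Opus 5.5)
Your proposal is correct and follows essentially the same route as the paper. The paper applies the convex-hull-preserving retraction on the convex set $\Pfin(X)\subset\mathcal M^0_{\mathrm{fin}}(X)$ (Lemma~\ref{lem:probability-convex} and Corollary~\ref{cor:convex}) and then composes with $\beta$, which is your $\pi\circ R\circ\iota$ once $\conv\iota(X)$ is identified with $\Pfin(X)-\delta_{x_0}$; for the conical case the paper likewise relies on the literature, citing Basso for the $1$-Lipschitz barycenter.
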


This answers the questions
in~\cite[Remark 3.5]{BacakKovalev} concerning the improvement
of the asymptotic Lipschitz bound for Hadamard spaces and the
existence of such retractions in complete Busemann-convex spaces. For Hadamard
spaces, Theorem~\ref{thm:barycenter-intro} improves the consecutive retraction constant  
$O(n^{3/2})$ of~\cite{BacakKovalev} to
$O(n\log n)$. 

A separate construction (Theorem~\ref{thm:direct-infinity}) gives
$\Lambda_{n,k}(\ell^\infty(\Gamma))\le2n-1$ for $1\le k<n$ and every index
set $\Gamma$, providing the sharp order of growth. Consequently, every absolute
$\lambda$-Lipschitz retract (Definition~\ref{def:alr}) satisfies $\Lambda_{n,k}\le\lambda(2n-1)$. 
 
All Lipschitz constants between finite subset spaces refer to the
Hausdorff metric. For labeled configurations we use the maximum of
the distances between corresponding points.
Section~\ref{sec:direct-infinity} gives efficient and explicit retractions between finite subset spaces of $\ell^\infty$; unfortunately, their output need not lie in the convex hull of the input. This short construction serves as a warm-up for Section~\ref{sec:direct}, where Theorem~\ref{thm:direct} is proved. Section~\ref{sec:consecutive-clustering} proves Theorem~\ref{thm:main}. 
Section~\ref{sec:barycenter} concerns the applications of Theorems~\ref{thm:direct} and~\ref{thm:main} to metric spaces. 
Section~\ref{sec:alr} presents results for absolute Lipschitz retracts, lower bounds, and open questions.

\section{Warm-up: clustering in \texorpdfstring{$\ell^\infty$}{ell-infinity}}\label{sec:direct-infinity}

In this section we construct an explicit $(2n-1)$-Lipschitz retraction $V(n)\to V(k)$ for every $1\le k<n$,
where $V=\ell^\infty(\Gamma)$ and $\Gamma$ is an arbitrary nonempty index set.
The construction is simpler than the one in Section~\ref{sec:direct} and shares some elements with it, such as the following definition. 

\begin{definition}\label{def:cut-height}
Let $A$ be a finite set with a pseudometric $d$. For $s\ge0$, let
$G_s(A)$ be the graph with vertex set $A$ joining $a$ and $b$ whenever
$d(a,b)\le s$. For $k\ge1$, the \emph{cut height} of $A$ is
\begin{equation}\label{eq:tree-scale}
 s_k(A)=\min\{s\ge0:G_s(A)\text{ has at most }k\text{ components}\}.
\end{equation}
\end{definition}

If $d$ is a metric, then $s_k(A)=0$ exactly when $|A|\le k$.

\begin{lemma}\label{lem:cut-height}
\textup{(a)} If $d$ and $d'$ are pseudometrics on a finite set $A$ with
$|d(a,b)-d'(a,b)|\le\eta$ for all $a,b\in A$, then the cut heights of
$A$ for $d$ and for $d'$ differ by at most $\eta$.

\textup{(b)} If $A$ and $B$ are finite subsets of a metric space, then
$|s_k(A)-s_k(B)|\le2d_H(A,B)$.
\end{lemma}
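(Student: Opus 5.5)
For part (a), I would use the characterization of the cut height through minimum spanning trees, or more directly a monotonicity argument on the graphs $G_s$. Write $s=s_k(A)$ for $d$ and $s'$ for $d'$. If $d(a,b)\le s$, then $d'(a,b)\le s+\eta$. Hence every edge of $G_s(A)$ computed with $d$ is also an edge of $G_{s+\eta}(A)$ computed with $d'$. Adding edges can only merge components, so $G'_{s+\eta}(A)$ has at most as many components as $G_s(A)$, which has at most $k$. By minimality, $s'\le s+\eta$. Swapping the roles of $d$ and $d'$ gives $s\le s'+\eta$. One small point needs checking: the minimum in \eqref{eq:tree-scale} is attained. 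This holds because the number of components of $G_s$ is a nonincreasing, right-continuous step function of $s$, changing only at the finitely many values $d(a,b)$.

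For part (b), I would reduce to (a) by building a common index set. Let $\delta=d_H(A,B)$. If $A$ and $B$ were labeled by the same set with corresponding points within $\delta$, then (a) with $\eta=2\delta$ would finish, by the triangle inequality. In general, however, $|A|\neq|B|$. So I would form the finite set $P=\{(a,b)\in A\times B:d(a,b)\le\delta\}$. Every $a\in A$ and every $b\in B$ appears in some pair, by the definition of the Hausdorff distance and since the minima are attained in finite sets. On $P$ I would define two pseudometrics, $d_1((a,b),(a',b'))=d(a,a')$ and $d_2((a,b),(a',b'))=d(b,b')$. Then $|d_1-d_2|\le d(a,b)+d(a',b')\le2\delta$. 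Part (a) gives that the cut heights of $(P,d_1)$ and $(P,d_2)$ differ by at most $2\delta$.

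The remaining step, which I expect to be the only real point requiring care, is to show that the cut height of $(P,d_1)$ equals $s_k(A)$, and likewise for $(P,d_2)$ and $s_k(B)$. The projection $\pi\colon P\to A$ is surjective, and pairs with the same first coordinate are at $d_1$-distance $0$, so they are adjacent in $G_s$ for every $s\ge0$. Moreover, two points of $P$ are adjacent in $G_s(P,d_1)$ exactly when their projections are adjacent in $G_s(A)$ or are equal. Hence $\pi$ induces a bijection between the components of $G_s(P,d_1)$ and those of $G_s(A)$ for each $s$. The two cut heights therefore coincide, and combining with the previous paragraph yields $|s_k(A)-s_k(B)|\le 2d_H(A,B)$.
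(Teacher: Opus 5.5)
Your proof is correct. Part (a) is exactly the paper's argument. For part (b) you take a different route. You reduce (b) to (a) by passing to the correspondence $P\subset A\times B$ with the two pulled-back pseudometrics $d_1,d_2$. You then check that the projections preserve the number of components of $G_s$ for every $s$, which is the step that needed care, and your lifting argument handles it correctly.

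The paper instead argues directly. It picks a map $f\colon A\to B$ with $d(a,f(a))\le\varepsilon$ and observes two things. First, $f$ sends each component of $G_s(A)$ into a single component of $G_{s+2\varepsilon}(B)$. Second, every $b\in B$ lies within $2\varepsilon$ of some $f(a)$. Hence $G_{s+2\varepsilon}(B)$ has at most as many components as $G_s(A)$, and interchanging $A$ and $B$ finishes.

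The paper's version is shorter and needs no auxiliary pseudometric space, but it needs the interchange step. Yours makes (b) a genuine corollary of (a) and yields both inequalities at once. It also proves slightly more, because it only uses the distortion of the correspondence. For any correspondence $R$ it gives $|s_k(A)-s_k(B)|\le\sup\{|d(a,a')-d(b,b')|:(a,b),(a',b')\in R\}$. So $s_k$ is $2$-Lipschitz for the Gromov--Hausdorff distance, in the spirit of the Carlsson--M\'emoli stability the paper cites. Your use of relations also anticipates the construction in Lemma~\ref{lem:common-image}. Finally, your remark that the minimum in \eqref{eq:tree-scale} is attained fills in a detail the paper leaves implicit.
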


\begin{proof}
(a) Every edge of $G_s(A)$ for $d$ is an edge of $G_{s+\eta}(A)$ for $d'$,
so the latter graph has at most as many components. Hence the cut height
for $d'$ is at most the cut height for $d$ plus $\eta$, and symmetrically.

(b) Let $\varepsilon=d_H(A,B)$ and $s=s_k(A)$, and choose a map
$f\colon A\to B$ with $d(a,f(a))\le\varepsilon$ for all $a\in A$. If $a$
and $a'$ are joined in $G_s(A)$, then $f(a)$ and $f(a')$ are equal or
joined in $G_{s+2\varepsilon}(B)$. Every point of $B$ is within
$\varepsilon$ of some $a\in A$, hence within $2\varepsilon$ of $f(a)$.
Therefore $G_{s+2\varepsilon}(B)$ has at most as many components as
$G_s(A)$, that is, $s_k(B)\le s_k(A)+2\varepsilon$. Interchange $A$ and $B$ to obtain $|s_k(A)-s_k(B)|\le2d_H(A,B)$.
\end{proof}

An isometric embedding of $A$ into $\ell^\infty(\Gamma)$ provides
a collection of $1$-Lipschitz coordinate functions
$x_\gamma\colon A\to\mathbb R$, $\gamma\in\Gamma$, such that
$d(a,b)=\sup_{\gamma\in\Gamma}|x_\gamma(a)-x_\gamma(b)|$ for all $a,b\in A$.
To replace $A$ by a smaller subset of $\ell^\infty(\Gamma)$, we will modify the coordinate functions to make them constant on some subsets of $A$. 

Fix $n\ge 2$; all sets in the rest of this section have at most $n$ points.

For a finite set $A\subset V$ and $s\ge0$, let $\rho_{A,s}$ be the shortest-path
pseudometric on the complete graph with vertex set $A$ and edge lengths $\pos{\norm{a-b}-s}$. Then
\[
 \norm{a-b} -(n-1)s\le\rho_{A,s}(a,b)\le  \norm{a-b},
\]
and the zero-distance classes of $\rho_{A,s}$ are exactly the components of $G_s(A)$.
Define three maps from $A$ to $V$ as follows: 
\[
 L_{A,s}(a)(\gamma)=\min_{z\in A}\bigl(z(\gamma)+\rho_{A,s}(a,z)\bigr),
 \qquad
 U_{A,s}(a)(\gamma)=\max_{z\in A}\bigl(z(\gamma)-\rho_{A,s}(a,z)\bigr),
\]
and $Y_{A,s}(a)=(L_{A,s}(a)+U_{A,s}(a))/2$. Thus, for each coordinate
$\gamma\in\Gamma$, the function $a\mapsto L_{A,s}(a)(\gamma)$ is the largest function on $A$ that is $1$-Lipschitz for $\rho_{A,s}$ and is majorized by coordinate evaluation $a\mapsto a(\gamma)$. Similarly,  $a\mapsto U_{A,s}(a)(\gamma)$ is the smallest such function that is minorized by the evaluation at $\gamma$. Since
$|z(\gamma)-a(\gamma)|\le \norm{a-z}$, the bounds on $\rho_{A,s}$ give
\[
 a-(n-1)s\le L_{A,s}(a)\le a\le U_{A,s}(a)\le a+(n-1)s
\]
coordinatewise. Hence $L_{A,s}(a)$ and $U_{A,s}(a)$ are indeed elements of $V$.  Furthermore, 
$\norm{Y_{A,s}(a)-a} \le (n-1)s/2$,  and $L_{A,0}(a)=U_{A,0}(a)=a$. By the triangle inequality for
$\rho_{A,s}$, all three maps are constant on every zero-distance
class, that is, on every component of $G_s(A)$.

\begin{theorem}\label{thm:direct-infinity}
The formula  $R_{n,k}(A)=\{Y_{A,s_k(A)}(a):a\in A\}$ defines a retraction $V(n)\to V(k)$ with $\Lip(R_{n,k})\le2n-1$.
\end{theorem}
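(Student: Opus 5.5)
We first check the retraction property and the cardinality bound, then prove the Lipschitz estimate by comparing $Y_{A,s}$ and $Y_{B,t}$ coordinate by coordinate through a lifting of paths.

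\emph{Well-definedness and retraction.} Put $s=s_k(A)$. The maps $L_{A,s}$, $U_{A,s}$, $Y_{A,s}$ are constant on each component of $G_s(A)$, and by Definition~\ref{def:cut-height} there are at most $k$ components. Hence $R_{n,k}(A)$ has at most $k$ points and lies in $V(k)$. If $|A|\le k$, then $s_k(A)=0$ and $L_{A,0}=U_{A,0}=\mathrm{id}$, so $R_{n,k}(A)=A$. The substance of the theorem is therefore the Lipschitz bound.

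\emph{Lipschitz estimate: setup.} Let $A,B\in V(n)$ with $\varepsilon=d_H(A,B)$, and put $s=s_k(A)$, $t=s_k(B)$. By Lemma~\ref{lem:cut-height}(b), $|s-t|\le 2\varepsilon$. By symmetry it suffices to show that for each $a\in A$ and each $b\in B$ with $\norm{a-b}\le\varepsilon$ we have
\[
\norm{Y_{A,s}(a)-Y_{B,t}(b)}\le(2n-1)\varepsilon .
\]
Since $Y$ is the average of $L$ and $U$, it is enough to bound $|L_{A,s}(a)(\gamma)-L_{B,t}(b)(\gamma)|$ and the analogous difference for $U$, for each fixed coordinate $\gamma$.

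\emph{Key step: lifting paths.} Choose a minimizer $w\in B$ in the formula for $L_{B,t}(b)(\gamma)$, and a shortest path $b=b_0,\dots,b_m=w$ for $\rho_{B,t}$ with distinct vertices, so $m\le n-1$. Lift it to $a=a_0,a_1,\dots,a_m$ in $A$ with $\norm{a_i-b_i}\le\varepsilon$. Then $a_m(\gamma)\le w(\gamma)+\varepsilon$, and each edge satisfies
\[
\pos{\norm{a_i-a_{i+1}}-s}\le\pos{\norm{b_i-b_{i+1}}-t}+2\varepsilon+\pos{t-s}.
\]
Summing over the path gives $L_{A,s}(a)(\gamma)\le L_{B,t}(b)(\gamma)+\varepsilon+m\bigl(2\varepsilon+\pos{t-s}\bigr)$. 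The reverse inequality and the $U$-version follow the same way.

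\emph{Main obstacle.} The naive count above yields $(4n-3)\varepsilon$, not $(2n-1)\varepsilon$. The difficulty is absorbing the parameter shift $|s-t|$ into the same per-edge budget of $2\varepsilon$. I would treat the set change at a fixed parameter and the change of parameter separately.

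\begin{itemize}
\item For the set change, the computation above with $s=t$ gives an error of $\varepsilon+2(n-1)\varepsilon=(2n-1)\varepsilon$.
\item For the parameter change, I would use that $L$ decreases and $U$ increases in the parameter, with offsetting effect on the midpoint $Y$.
\item I would also use the extremal choice of $t$ as a cut height, so that edges of length at most $t$ contribute nothing. This should let the $\pos{t-s}$ terms cancel against the $2\varepsilon$ slack on the edges where the positive part is active.
\end{itemize}

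If this cancellation fails, the fallback is to compare both configurations to an intermediate pseudometric on the correspondence, in the spirit of Lemma~\ref{lem:cut-height}(a). One would then show that the relevant path sums change by at most $2\varepsilon$ per edge in total, rather than per source of error.
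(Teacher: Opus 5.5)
\textbf{There is a genuine gap.} Your setup is sound: the retraction and cardinality check, the reduction to pairs $a\in A$, $b\in B$ with $\norm{a-b}\le\varepsilon$, and the lifting of $\rho$-geodesics. But the proposal stops at the point that carries the content of the theorem. You never reach $(2n-1)\varepsilon$, and the route you outline cannot reach it. Suppose you pass through the intermediate point $Y_{B,s}(b)$. The set change at the fixed parameter $s$ costs $(2n-1)\varepsilon$. Monotonicity of $L$ and $U$ in the parameter only gives $\norm{Y_{B,s}(b)-Y_{B,t}(b)}\le(n-1)\abs{s-t}/2\le(n-1)\varepsilon$. This bound cannot be improved: for $B=\{0,1,\dots,n-1\}\subset\R$ and $0\le s\le1$ one computes $L_{B,s}(0)=0$ and $U_{B,s}(0)=(n-1)s$, so $Y_{B,s}(0)=(n-1)s/2$. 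Here $L$ does not move at all, so the offsetting is only partial when the parameter change is isolated. The decomposition therefore yields only $(3n-2)\varepsilon$. The extremality of $t$ as a cut height is also irrelevant; nothing beyond $\abs{s-t}\le2\varepsilon$ is needed.

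\textbf{The missing idea} is to keep the parameter shift \emph{signed} in the per-edge estimate, instead of replacing it by $\pos{t-s}$. Since $\abs{s-t}\le2\varepsilon$, the constant $2\varepsilon+t-s$ is nonnegative, and $\pos{x+c}\le\pos{x}+c$ for $c\ge0$. So your lift from $B$ to $A$ already gives
\[
L_{A,s}(a)(\gamma)-L_{B,t}(b)(\gamma)\le\varepsilon+(n-1)(2\varepsilon+t-s).
\]
For the $U$-part of the \emph{same} difference $Y_{A,s}(a)-Y_{B,t}(b)$, you must lift in the opposite direction. Take the maximizer $z\in A$ for $U_{A,s}(a)(\gamma)$, and lift a $\rho_{A,s}$-geodesic from $a$ to $z$ into $B$, starting at $b$. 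This gives
\[
U_{A,s}(a)(\gamma)-U_{B,t}(b)(\gamma)\le\varepsilon+(n-1)(2\varepsilon+s-t).
\]
Lifting from $B$ to $A$ again, as ``the same way'' suggests, would instead bound $U_{B,t}(b)-U_{A,s}(a)$, which cannot be paired with your $L$-estimate. With the correct pairing, the shifts $\pm(n-1)(s-t)$ cancel exactly upon averaging, so $Y_{A,s}(a)(\gamma)-Y_{B,t}(b)(\gamma)\le(2n-1)\varepsilon$. Exchanging the roles of $(A,s,a)$ and $(B,t,b)$ gives the reverse inequality. This is the paper's argument. It records the step as the two-sided comparison $\rho_{B,t}(b,w)\le\rho_{A,s}(a,z)+(n-1)(2\varepsilon+s-t)$ together with its mirror image.
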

\begin{proof}
The output has at most as many points as $G_{s_k(A)}(A)$ has
components, hence at most $k$. If $|A|\le k$, then $s_k(A)=0$ and
$R_{n,k}(A)=A$.
Given $A, B\in V(n)$, let $\varepsilon=d_H(A,B)$, $s=s_k(A)$ and $t=s_k(B)$. By Lemma~\ref{lem:cut-height}(b), $|s-t|\le2\varepsilon$.

Given $a,z\in A$ and $b,w\in B$ with
$\norm{a-b}\le\varepsilon$ and $\norm{z-w}\le\varepsilon$, put
$c=2\varepsilon+s-t\ge0$.
If $a\ne z$, choose a simple path
$a=a_0,\ldots,a_q=z$ realizing $\rho_{A,s}(a,z)$, where
$q\le |A|-1\le n-1$. Choose $b_j\in B$ with
$\norm{a_j-b_j}\le\varepsilon$, taking $b_0=b$ and $b_q=w$.
For each edge,
\[
 \pos{\norm{b_j-b_{j-1}}-t}
 \le \pos{\norm{a_j-a_{j-1}}-s}+c.
\]
Summing gives
\[
 \rho_{B,t}(b,w)
 \le \rho_{A,s}(a,z)+(n-1)(2\varepsilon+s-t).
\]
If $a=z$, the same inequality follows from
\[
 \rho_{B,t}(b,w)
 \le \pos{2\varepsilon-t}
 \le c\le(n-1)c,
\]
since $\rho_{A,s}(a,z)=0$.
Interchanging $(A,s,a,z)$ and $(B,t,b,w)$ gives
\[
 \rho_{A,s}(a,z)
 \le \rho_{B,t}(b,w)+(n-1)(2\varepsilon+t-s).
\]
Since $\norm{a-b} \le\varepsilon$, these comparisons give coordinatewise inequalities
\begin{equation*}\begin{split}
 L_{B,t}(b)-L_{A,s}(a) & \le\varepsilon+(n-1)(2\varepsilon+s-t),
 \\
 U_{B,t}(b)-U_{A,s}(a) & \le\varepsilon+(n-1)(2\varepsilon+t-s).
\end{split}\end{equation*}
Thus $Y_{B,t}(b)-Y_{A,s}(a)\le (2n-1)\varepsilon$ coordinatewise. By symmetry, 
$\norm{Y_{A,s}(a)-Y_{B,t}(b)} \le(2n-1)\varepsilon$, and the theorem follows.
\end{proof}

\section{Clustering by harmonic interpolation: proof of Theorem~\ref{thm:direct}}\label{sec:direct}

\subsection{Single-linkage trees and harmonic interpolation}

For a finite metric space $(A,d)$, define
\begin{equation}\label{eq:tree-ultrametric}
 u_A(a,b)=\min_{a=a_0,\ldots,a_q=b}\ \max_{1\le j\le q}
                       d(a_{j-1},a_j),\qquad u_A(a,a)=0.
\end{equation}
This is the single-linkage ultrametric
\cite{CarlssonMemoli,GowerRoss,JardineSibson}. A  path realizing $u_A$ may be
taken simple, so if $|A|=m$, then
\begin{equation}\label{eq:tree-distortion}
 u_A(a,b)\le d(a,b)\le(m-1)u_A(a,b).
\end{equation}
The second inequality follows by summing the original distances
along a simple minimizing path with at most $m-1$ edges.

The components of the graph $G_s(A)$ of Definition~\ref{def:cut-height}
are the equivalence classes of the relation $u_A(a,b)\le s$.

The nested components of graphs $G_s(A)$ form a rooted metric tree $T_A$. Its leaves
are the points of $A$, at height zero; the height $h(v)$ of a branching
vertex $v$ is the threshold $s$ at which its children merge. An edge from
a child $v$ to its parent $p(v)$ has length
$\ell_v=h(p(v))-h(v)$.
For two leaves $a,b$, their least common ancestor has height $u_A(a,b)$,
and their tree distance is
\begin{equation}\label{eq:tree-tree-distance}
 d_{T_A}(a,b)=2u_A(a,b).
\end{equation}

For a finite rooted metric tree $T$ with leaves $a_1,\ldots,a_m$, a
function $G$ on its vertices, extended affinely along the edges, has
\emph{Laplacian}
\[
 \Delta G(v)=\sum_{w\sim v}\frac{G(v)-G(w)}{\ell_{vw}}
\]
at each nonleaf vertex $v$. Thus $-\Delta G(v)$ is the sum of the slopes
of $G$ along the edges leaving $v$. We call $G$ \emph{harmonic} if
$\Delta G=0$ at every nonleaf
vertex. This definition applies equally to vector-valued functions. Given boundary (vector) values $x_1,\ldots,x_m\in X$,
the \emph{harmonic interpolation} $F\colon T\to X$ is affine on each edge,
takes value $x_i$ at $a_i$, and satisfies
\begin{equation}\label{eq:tree-harmonic}
 \Delta F(v)=\sum_{w\sim v}\frac{F(v)-F(w)}{\ell_{vw}}=0
\end{equation}
at every nonleaf vertex $v$, including the root. By~\eqref{eq:tree-harmonic}, $F(v)$ is a weighted average of adjacent values, which yields the maximum principle in the scalar-valued case. The maximum principle
gives uniqueness of solution, and therefore its existence, since the matrix of~\eqref{eq:tree-harmonic} is square~\cite[\S2.1]{LyonsPeres}. For a leaf $a_i$, let $\omega_\cdot(i)$ be
the scalar solution equal to $1$ at $a_i$ and to $0$ at the other leaves.
Thus $\omega_v(i)$ is the \emph{harmonic measure} of $a_i$ with respect to $v$. The maximum principle gives
$\omega_v(i)\ge0$, and we have $\sum_i\omega_v(i)=1$ because constants are
harmonic. We obtain the unique solution of~\eqref{eq:tree-harmonic}, in any vector space $X$, via the formula 
\begin{equation}\label{eq:harmonic-measure-formula-F}
 F(v)=\sum_i\omega_v(i)x_i\in\conv\{x_1,\ldots,x_m\}.
\end{equation}

\begin{lemma}[Edge slopes]\label{lem:tree-slopes}
Let $T$ be a finite rooted tree with $m\ge2$ leaves, with heights as
above: leaves at height zero and edge lengths given by height
differences. Let $F$ be the harmonic interpolation of boundary vectors
satisfying
\[
 \norm{x_i-x_j}\le L\,d_T(a_i,a_j).
\]
For a nonroot vertex $v$, let
\[
 D_vF=\frac{F(v)-F(p(v))}{\ell_v}
\]
be the slope of $F$ on the edge from $p(v)$ down to $v$, and let $C_v$
be the set of leaves descending from $v$. Put $q_i=D_{a_i}F$. If $v$ is
an ancestor of $a_i$, then
\begin{equation}\label{eq:tree-ancestor}
 \norm{F(v)-x_i}\le2Lh(v).
\end{equation}
Moreover, $\norm{q_i}\le2L$,
\begin{equation}\label{eq:tree-flow}
 D_vF=\sum_{i\in C_v}q_i,\qquad \sum_iq_i=0,
\end{equation}
and $\norm{D_vF}\le2L\lfloor m/2\rfloor$ for every nonroot vertex $v$.
\end{lemma}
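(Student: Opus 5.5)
The plan is to derive all claims from three ingredients: the harmonicity relation \eqref{eq:tree-harmonic} rewritten as a conservation law for slopes, the harmonic-measure representation \eqref{eq:harmonic-measure-formula-F}, and a comparison (maximum principle) argument against an explicit scalar supersolution.

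\emph{Flow identity \eqref{eq:tree-flow}.} At a nonleaf vertex $v$, the slope of $F$ along the edge from $v$ down to a child $w$ is $(F(w)-F(v))/\ell_w=D_wF$. The slope along the edge up to the parent is $(F(p(v))-F(v))/\ell_v=-D_vF$. Hence \eqref{eq:tree-harmonic} reads
\[
 D_vF=\sum_{w\text{ child of }v}D_wF
\]
at a nonroot nonleaf $v$, and $\sum_{w}D_wF=0$ over the children of the root. Induction on the height of $v$, starting from the leaves where $D_{a_i}F=q_i$, gives $D_vF=\sum_{i\in C_v}q_i$. Applying the root relation then gives $\sum_iq_i=0$.

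\emph{Ancestor bound \eqref{eq:tree-ancestor}.} By \eqref{eq:harmonic-measure-formula-F} and $\sum_j\omega_v(j)=1$,
\[
 \norm{F(v)-x_i}\le\sum_j\omega_v(j)\norm{x_j-x_i}\le L\,\Phi(v),
\]
where $\Phi$ is the scalar harmonic interpolation of the leaf values $d_T(a_j,a_i)$. I compare $\Phi$ with $G(w)=d_T(w,a_i)+h(w)$. At the leaves, $G(a_j)=d_T(a_j,a_i)$. At an ancestor $v$ of $a_i$, $G(v)=2h(v)$.

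I check that $\Delta G\ge0$ at every nonleaf vertex $w$, term by term.
\begin{itemize}[nosep]
\item Along an edge to a child lying away from $a_i$, $d_T$ increases by $\ell$ and $h$ decreases by $\ell$, so the term is $0$.
\item Along the edge to the child toward $a_i$ (when $w$ is an ancestor of $a_i$), both decrease, so the term is $+2$.
\item Along the edge to the parent, the term is $-2$ if $w$ is an ancestor of $a_i$ and $0$ otherwise.
\end{itemize}
Hence $\Delta G=0$ off the root and $\Delta G=2$ at the root, if the root is an ancestor of $a_i$ (which it always is). Thus $G$ dominates the weighted average of its neighbours at every nonleaf vertex. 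Since $\Phi-G$ then satisfies the reverse inequality and vanishes at the leaves, the scalar maximum principle gives $\Phi\le G$. Therefore $\norm{F(v)-x_i}\le 2Lh(v)$.

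\emph{Slope bounds.} Apply \eqref{eq:tree-ancestor} with $v=p(a_i)$, using $\ell_{a_i}=h(p(a_i))$:
\[
 \norm{q_i}=\frac{\norm{x_i-F(p(a_i))}}{h(p(a_i))}\le2L.
\]
For a general nonroot $v$, \eqref{eq:tree-flow} gives
\[
 D_vF=\sum_{i\in C_v}q_i=-\sum_{i\notin C_v}q_i.
\]
Choosing the smaller of the two index sets, whose size is at most $\lfloor m/2\rfloor$, yields $\norm{D_vF}\le2L\lfloor m/2\rfloor$.

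The main obstacle is the ancestor bound, since the harmonic measure from $v$ charges leaves outside $C_v$. The crux is finding the supersolution $G=d_T(\cdot,a_i)+h$; the remaining steps are bookkeeping.
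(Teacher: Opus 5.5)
Your proof is correct and follows the paper's argument. Your comparison function $d_T(\cdot,a_i)+h$ is exactly the paper's $g_i=2h(\cdot\wedge a_i)$, with the same Laplacian computation ($0$ off the root, $2$ at the root), and the flow identity and slope bounds are derived identically. The only minor difference is how you reduce to a scalar problem: you use the harmonic-measure representation and the triangle inequality, whereas the paper applies the maximum principle to $\varphi(F-x_i)-Lg_i$ for each norm-one functional $\varphi$. Both reductions are valid.
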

\begin{proof}
Fix a leaf $a_i$. Write $v\wedge a_i$ for the least common ancestor
of $v$ and $a_i$, and consider
the function $g_i(v)=2h(v\wedge a_i)$. 
On the root-to-$a_i$ path this function has slope $2$ in the upward direction;
it is constant on every branch off that path. Hence $\Delta g_i=0$ at
all nonleaf vertices except the root, where $\Delta g_i=2$. Its boundary
values are $g_i(a_j)=d_T(a_i,a_j)$.

For every functional $\varphi\in X^*$ with $\norm{\varphi}\le1$, the function
$G=\varphi(F-x_i)-Lg_i$ satisfies $\Delta G\le0$ at the nonleaf vertices, that
is, $G(w)$ is at most the weighted average of the adjacent values.
Hence $G$ attains its maximum at a leaf, where $G\le0$ by the hypothesis
on the boundary vectors. Thus $\varphi(F-x_i)\le Lg_i$, and taking the
supremum over $\varphi$ proves~\eqref{eq:tree-ancestor}. For $v=p(a_i)$ it gives
$\norm{q_i}=\norm{x_i-F(p(a_i))}/h(p(a_i))\le2L$.

In $\Delta F(v)$, the edge above $v$ contributes $D_vF$, and each edge
below $v$ contributes minus its slope. Thus \eqref{eq:tree-harmonic} says
that at every nonleaf vertex $v$ other than the root, $D_vF$ is the sum
of the slopes below $v$, and that the slopes below the root sum to zero.
Induction from the leaves proves the first identity in~\eqref{eq:tree-flow},
and the condition at the root gives the second. Hence
\[
 \norm{D_vF}\le2L\min\{|C_v|,m-|C_v|\}\le2L\lfloor m/2\rfloor.
\]
The same arguments apply when some nonroot vertices have a single child (which may happen when
we need to divide some edges). 
\end{proof}

\subsection{Definition of retraction in Theorem~\ref{thm:direct}} Given a nonempty finite set $A\subset X$, where $X$ is a normed space, let $F_A$ 
be the harmonic interpolation on the tree $T_A$ with boundary value $a$ at each leaf $a\in A$. 
For $0\le s\le h(\mathrm{root})$, let $p_a(s)$
be the ancestor of $a$ at height $s$, and define
\begin{equation}\label{eq:tree-definition}
 r_k(A)=\{F_A(p_a(s_k(A))):a\in A\}.
\end{equation}
For a singleton this means the same singleton. The formula defines $r_k$
on all nonempty finite subsets of $X$; the retraction in
Theorem~\ref{thm:direct} is its restriction to $X(n)$.
There are at most $k$ distinct cut points $p_a(s_k(A))$, so the
output has at most $k$ points. If $|A|\le k$, then $s_k(A)=0$ and $r_k(A)=A$. 
Convex-hull preservation follows from the formula~\eqref{eq:harmonic-measure-formula-F} expressing $F$ in terms of harmonic measure. 
By~\eqref{eq:tree-distortion}--\eqref{eq:tree-tree-distance} we may apply Lemma~\ref{lem:tree-slopes}
with $L=(n-1)/2$ when $A\in X(n)$; then \eqref{eq:tree-ancestor} proves \eqref{eq:tree-main-movement}.
It remains to prove the Lipschitz property of $r_k$.

\subsection{Comparing two harmonic interpolations}

We now compare harmonic interpolations on one tree for two choices of
heights and boundary vectors. The difference is controlled by the
changes of vertex heights; edge lengths, being differences of heights,
do not enter separately.

Let $T$ be a finite rooted tree with leaves $a_1,\ldots,a_m$, in which
nonroot vertices may have a single child. A \emph{height function} on
$T$ is a function $h$ on its vertices that vanishes at the leaves and
satisfies $h(v)\le h(p(v))$ for every nonroot vertex $v$, so that the
edge above $v$ has length $\ell_v=h(p(v))-h(v)\ge0$. Lengths zero occur
in the proof of Lemma~\ref{lem:tree-tuple}, at the finitely many times
where the single-linkage tree changes or points merge. If some lengths
vanish, the harmonic interpolation of boundary vectors $x_i$ is defined
on the tree obtained by contracting the edges of length zero, provided
$x_i=x_j$ whenever $a_i$ and $a_j$ are contracted to the same vertex;
each vertex of $T$ receives the value at its image. Leaf slopes are then
defined as follows. If $\ell_{a_i}>0$, put $q_i=D_{a_i}F$. Otherwise let
$u$ be the highest ancestor of $a_i$ with $h(u)=0$, whose subtree is
contracted to a leaf, and put $q_i=D_uF/|C_u|$, or $q_i=0$ if $u$ is the
root. The identities~\eqref{eq:tree-flow} in the contracted tree give
\begin{equation}\label{eq:tree-flow-weak}
 \sum_{i\in C_v}q_i=D_vF\quad\text{whenever }\ell_v>0,
 \qquad \sum_iq_i=0.
\end{equation}

\begin{lemma}[Comparison of harmonic interpolants]\label{lem:tree-comparison}
Let $h$ and $h'$ be height functions on $T$ such that all edge lengths
$\ell_v$ for $h$ are positive. Let $F$ and $F'$ be the harmonic
interpolations of boundary vectors $x_j$ for $h$ and $x'_j$ for $h'$,
let $q'_i$ be the leaf slopes of $F'$, and let $\omega$ be harmonic
measure for $h$. Then, for every vertex $v$,
\begin{equation}\label{eq:tree-variation}
 F'(v)-F(v)=\sum_j\omega_v(j)(x'_j-x_j)+
 \sum_iq'_i\Bigl(
       \sum_j\omega_v(j) \eta(a_i\wedge a_j)
                  -\eta(a_i\wedge v)\Bigr)
\end{equation}
where $\eta  =h'-h$.
Consequently, if $\norm{x'_j-x_j}\le\beta$ and $\norm{q'_i}\le Q$ for all
$i,j$, and $|\eta(w)|\le\delta$ for all vertices $w$, then
\begin{equation}\label{eq:tree-variation-bound}
 \norm{F'(v)-F(v)}\le\beta+2mQ\delta.
\end{equation}
\end{lemma}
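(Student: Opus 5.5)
The plan is to form the difference $G=F'-F$ on $T$ (heights $h$) and compute its Laplacian $\Delta_h G$ with respect to the $h$-lengths. Then $G$ is recovered from its boundary values and its Laplacian through the Green's function of $T$ relative to $h$, and we identify that Green's function explicitly in terms of harmonic measure and heights of least common ancestors.

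First, to compute $\Delta_hF'(v)$, write the $h$-slope of $F'$ on the edge above $w$ as $(F'(w)-F'(p(w)))/\ell_w$. Since $F'(w)-F'(p(w))=\ell'_wD'_wF'$ whenever $\ell'_w>0$, and $F'(w)=F'(p(w))$ when $\ell'_w=0$, this slope equals $(\ell'_w/\ell_w)\,D'_wF'$ in both cases. Write $\ell'_w=\ell_w+\eta(p(w))-\eta(w)$. Harmonicity of $F'$ for $h'$, as encoded by the flow identities \eqref{eq:tree-flow-weak}, cancels the $\ell_w$ part. What remains is a source term at each nonleaf vertex $v$, built from $\eta(v)-\eta(p(v))$ and $\eta(v)-\eta(w)$ for the children $w$, times $D'F'$ over the same edge divided by $\ell$. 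Next, use $D'_wF'=\sum_{i\in C_w}q'_i$, which holds also when $\ell'_w=0$ since then the coefficient vanishes, to expand this source in the variables $q'_i$. The expected outcome is that $\Delta_h F'$ is a linear combination $\sum_i q'_i\,\Delta_h\phi_i$ with $\phi_i(v)=\eta(a_i\wedge v)$. The reason is that $\phi_i$ is affine along no edges but is piecewise constant off the path from the root to $a_i$, exactly like $g_i$ in Lemma~\ref{lem:tree-slopes}.

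Equivalently, and more cleanly, I would directly verify that
\[
H(v)=F'(v)-\sum_i q'_i\,\eta(a_i\wedge v)
\]
(interpolated affinely in $h$ on edges) is $h$-harmonic at every nonleaf vertex. This is the key algebraic check and, I expect, the main obstacle. It requires care at vertices where some $\ell'$ vanish, where the contracted-tree definition of $q'_i$ enters, and at the root, where $\sum_iq'_i=0$ is used. Once $H$ is harmonic, it equals $\sum_j\omega_v(j)H(a_j)$, by uniqueness and \eqref{eq:harmonic-measure-formula-F}. Since $\eta$ vanishes at leaves, $\eta(a_i\wedge a_j)$ is the relevant boundary value, so $H(a_j)=x'_j-\sum_iq'_i\eta(a_i\wedge a_j)$. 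Subtracting $F(v)=\sum_j\omega_v(j)x_j$ and rearranging gives \eqref{eq:tree-variation}.

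For the bound \eqref{eq:tree-variation-bound}, the first sum has norm at most $\beta$ because $\omega_v$ is a probability vector. In the second sum, each inner bracket is bounded by $\sum_j\omega_v(j)\delta+\delta=2\delta$, so the whole term is at most $m\cdot Q\cdot2\delta$.
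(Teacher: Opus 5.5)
Your strategy is the paper's: correct $F'$ by $\Theta(v)=\sum_iq'_i\,\eta(a_i\wedge v)$, show that the corrected function is harmonic for $h$, and expand it in harmonic measure. However, the correction has the wrong sign, so the step you flag as the main obstacle fails as you set it up: $H=F'-\Theta$ is not $h$-harmonic. The computation you postponed is short. Put $D_vF':=\sum_{i\in C_v}q'_i$. Between $v$ and $p(v)$, the vertex $a_i\wedge\cdot$ changes only for $i\in C_v$, so
\[
\Theta(v)-\Theta(p(v))=D_vF'\bigl(\eta(v)-\eta(p(v))\bigr)=(\ell_v-\ell'_v)\,D_vF'.
\]
Also $F'(v)-F'(p(v))=\ell'_vD_vF'$ in all cases, with both sides vanishing when $\ell'_v=0$. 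Hence $F'+\Theta$ has increments $\ell_vD_vF'$. Its $h$-slopes are the flow $D_vF'$, which satisfies $D_vF'=\sum_{p(w)=v}D_wF'$ at nonroot vertices and sums to $\sum_iq'_i=0$ at the root. These are exactly the harmonicity equations for $h$. Your $H$ instead has increments $(2\ell'_v-\ell_v)D_vF'$. Its $h$-slopes $(2\ell'_v/\ell_v-1)D_vF'$ do not satisfy the flow identities in general. Equivalently, the relation in your first paragraph should read $\Delta_hF'=-\sum_iq'_i\,\Delta_h\phi_i$.

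The sign error also breaks your last step. Expanding your $H$ in harmonic measure gives \eqref{eq:tree-variation} with the second sum negated, so rearranging does not yield the stated identity. A concrete check: let $a_1,a_2$ merge at $u$, and $u,a_3$ merge at the root $r$. Take $h(u)=h'(u)=1$, $h(r)=2$, $h'(r)=3$, and scalar data $x=x'=(0,0,1)$. Then $F(r)=3/7$ and $F'(r)=5/11$, so $F'(r)-F(r)=2/77$. This agrees with \eqref{eq:tree-variation}, while your version gives $-2/77$. With $G=F'+\Theta$ the rest of your argument goes through verbatim: $G(a_j)=x'_j+\sum_iq'_i\,\eta(a_i\wedge a_j)$, and you subtract $F(v)=\sum_j\omega_v(j)x_j$. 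The edge-increment computation above also makes the vertex-by-vertex source analysis in your first paragraph unnecessary. Your derivation of \eqref{eq:tree-variation-bound} is correct and does not depend on the sign.
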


\begin{proof}
Let $\ell'$ be the edge lengths for $h'$, and put $\eta=h'-h$. For every
nonroot vertex $v$, put $D_vF'=\sum_{i\in C_v}q'_i$. By
\eqref{eq:tree-flow-weak}, this is the slope of $F'$ when $\ell'_v>0$, so
in all cases
\[
 F'(v)-F'(p(v))=\ell'_v\,D_vF',
\]
both sides vanishing when $\ell'_v=0$. Consider
\[
 \Theta(v)=\sum_iq'_i\,\eta(a_i\wedge v),\qquad G=F'+\Theta.
\]
If $i\in C_v$, then $a_i\wedge v=v$ and $a_i\wedge p(v)=p(v)$; otherwise
$a_i\wedge v=a_i\wedge p(v)$. Hence
\[
 \Theta(v)-\Theta(p(v))=D_vF'\bigl(\eta(v)-\eta(p(v))\bigr)
 =(\ell_v-\ell'_v)\,D_vF',
\]
and therefore $G(v)-G(p(v))=\ell_v\,D_vF'$. 

By their definition as sums of leaf slopes, the quantities $D_vF'$
satisfy
\[
 D_vF'=\sum_{p(w)=v}D_wF'
\]
at every nonroot nonleaf vertex $v$. At the root, the sum over its
children is $\sum_iq'_i=0$. These identities hold even when some
$\ell'_v$ vanish. Since
\[
 \frac{G(v)-G(p(v))}{\ell_v}=D_vF',
\]
they are precisely the harmonicity equations for $G$ with respect
to the positive edge lengths $\ell$. 

At the leaves we have $G(a_j) = x'_j+\Theta(a_j)$, so
\[
 F'(v)+\Theta(v)=\sum_j\omega_v(j)\bigl(x'_j+\Theta(a_j)\bigr)
\]
for every vertex $v$. Subtracting $F(v)=\sum_j\omega_v(j)x_j$
gives~\eqref{eq:tree-variation}. The estimate~\eqref{eq:tree-variation-bound} follows from~\eqref{eq:tree-variation} via the triangle inequality.
\end{proof}

\subsection{Moving the points of a finite set}

\begin{lemma}[Moving points]\label{lem:tree-tuple}
Let $A\subset X$ be a set with $m$ points, and let $f\colon A\to X$
satisfy $\norm{f(a)-a}\le\varepsilon$ for all $a\in A$. Then, for every
$k\ge1$, the retraction~\eqref{eq:tree-definition} satisfies 
\begin{equation}\label{eq:tree-tuple}
 d_H\bigl(r_k(A),r_k(f(A))\bigr)\le\bigl(1+4m(m-1)\bigr)\varepsilon.
\end{equation}
\end{lemma}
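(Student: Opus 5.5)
We will prove \eqref{eq:tree-tuple} by a continuity argument: move the points linearly and integrate a local Lipschitz bound. Put $a(t)=a+t(f(a)-a)$ for $t\in[0,1]$. We view the family as a labeled configuration $\{a(t)\}_{a\in A}$ with $m$ labels, allowing labels to collide. Each label $a$ carries the output point $P_a(t)=F_{A(t)}\bigl(p_a(s_k(A(t)))\bigr)$. Note that $r_k(A(t))=\{P_a(t)\}$. This is so even when labels coincide, because coincident labels are contracted to the same leaf and the interpolation is well defined by the convention preceding Lemma~\ref{lem:tree-comparison}. It then suffices to show that each $P_a$ is Lipschitz in $t$ with constant $(1+4m(m-1))\varepsilon$, since then $d_H(r_k(A),r_k(f(A)))\le\max_a\norm{P_a(1)-P_a(0)}$.

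\textbf{Step 1: the tree is locally fixed.} The pairwise distances $\norm{a(t)-b(t)}$ are convex in $t$, so on $[0,1]$ the ordering of the $\binom m2$ distances changes only at finitely many times, up to degenerate intervals of equality. The exceptional case is two convex functions agreeing on an interval; there equality persists, and the combinatorial structure is still constant. Hence $[0,1]$ splits into finitely many subintervals. On each subinterval the single-linkage tree has a fixed combinatorial type $T$, and the merge heights are given by fixed pairwise distances. We also fix a vertex $c$ at which the cut height $s_k$ is attained: this is the merge height realizing the $k$-th component count. We insert $c$ as a single-child vertex on the edge above $a$, so that $p_a(s_k)$ is a vertex of $T$. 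Each height $h_t(w)$ is a single distance $\norm{x(t)-y(t)}$, so $|h_t(w)-h_{t'}(w)|\le2\varepsilon|t-t'|$. The inserted vertex has the same Lipschitz bound, by Lemma~\ref{lem:cut-height}(b) and the fact that $d_H(A(t),A(t'))\le\varepsilon|t-t'|$. Boundary vectors move by $\beta=\varepsilon|t-t'|$. Since $P_a$ is continuous, it is enough to prove the bound on each open subinterval.

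\textbf{Step 2: apply the comparison lemma.} On a subinterval, for $t,t'$ in it, apply Lemma~\ref{lem:tree-comparison} on $T$ with $h=h_t$, which has positive edge lengths in the interior, and $h'=h_{t'}$. This gives
\[
\norm{P_a(t')-P_a(t)}\le\beta+2mQ\delta,\qquad \delta=2\varepsilon|t-t'|.
\]
For $Q$ we use Lemma~\ref{lem:tree-slopes}: the leaf slopes satisfy $\norm{q'_i}\le 2L$ with $L=(m-1)/2$ by \eqref{eq:tree-distortion}--\eqref{eq:tree-tree-distance}, so $Q=m-1$. Then
\[
\norm{P_a(t')-P_a(t)}\le\bigl(1+4m(m-1)\bigr)\varepsilon|t-t'|,
\]
which is exactly the claimed constant.

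\textbf{Main obstacle.} The hard part is degenerate times: ties in distances, collisions of points where labels merge, and vertices with zero edge length. At such times the tree for $t'$ is a contraction or refinement of the tree for $t$. The plan is to always take as $T$ the common refinement valid on the closed subinterval. Zero lengths are allowed for $h'$ in Lemma~\ref{lem:tree-comparison}, and the leaf-slope conventions \eqref{eq:tree-flow-weak} were designed for exactly this. We must check that the bound $\norm{q'_i}\le2L$ survives the averaging convention $q_i=D_uF/|C_u|$. It should, because $D_uF$ is a sum of $|C_u|$ slopes each bounded by $2L$, after applying Lemma~\ref{lem:tree-slopes} to the contracted tree. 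We also need $P_a$ to be continuous across breakpoints, which follows from the uniqueness of the harmonic interpolation under contraction. Summing over the finitely many subintervals then completes the proof.
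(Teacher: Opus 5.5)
\textbf{The gap is in Step~1.} Convexity of $t\mapsto\norm{a(t)-b(t)}$ does not imply that the ordering of the $\binom m2$ distances changes only finitely often. Two convex functions can cross infinitely many times without agreeing on any interval. For example, $g_1(t)=2+t^2$ and $g_2(t)=2+t^2+\epsilon t^5\sin(1/t)$ are both convex on $[0,1]$ for small $\epsilon>0$. More generally, their difference can be any $C^2$ function with small second derivative, so the tie set can be an arbitrary closed set, even a Cantor set of positive measure.

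This really happens in normed spaces. In $\ell^\infty$, $\norm{u+tv}_\infty$ is a supremum of affine functions of $t$, and it can equal any Lipschitz convex function bounded away from $0$ on $[0,1]$ (take the supporting lines at rational points as coordinates). Put two such pairs far apart, with $m=4$ and $k=3$. Then the closest pair, and hence the tree $T(A(t))$ and the cut vertex, switch infinitely often as $t\to0$. So your subdivision need not be finite, and your argument would need a finite subdivision. A bound $K\varepsilon$ on each open interval of constancy, plus continuity, does not give the global Lipschitz bound when the exceptional set is uncountable or has positive measure.

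\textbf{How the paper avoids this.} It does not build the tree from the actual distances of the moving set. It interpolates the boundary vectors $z_a(t)=(1-t)a+tf(a)$ and, separately, the distances
\[
 d_{ab}(t)=(1-t)\norm{a-b}+t\norm{f(a)-f(b)},
\]
and it uses the single-linkage tree of the pseudometric $d(t)$ on the label set $A$. This gives:
\begin{itemize}
\item all heights are affine in $t$, so any two either coincide identically or cross at most once, and there are finitely many breakpoints;
\item $d(t)$ is a metric for $t<1$, so there are no intermediate collisions;
\item the endpoints $t=0,1$ still give $r_k(A)$ and $r_k(f(A))$;
\item the triangle inequality gives $\norm{z_a(t)-z_b(t)}\le d_{ab}(t)$, which is all that \eqref{eq:tree-distortion}--\eqref{eq:tree-tree-distance} and Lemma~\ref{lem:tree-slopes} need for $Q=m-1$.
\end{itemize}
With this modification your Step~2 computation goes through unchanged. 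Alternatively, you could keep the actual distances and replace the finite subdivision by a pointwise upper-derivative estimate at every $t_0$, comparing with nearby $t$ for which $T(t)$ refines $T(t_0)$. That route requires checking that the cut point at $t$ lies on an edge whose $h_{t_0}$-heights bracket $s_k(A(t_0))$, which you have not done.

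\textbf{Two smaller points.} First, insert the cut vertex only when $p_a(s_k)$ is not already a vertex on the interval. Otherwise $h_t$ has a zero-length edge and Lemma~\ref{lem:tree-comparison} cannot be applied with $h=h_t$. Second, continuity at the breakpoints is obtained most cleanly as in the paper: apply Lemma~\ref{lem:tree-comparison} on the refined tree with $h'$ the endpoint height function, where zero lengths are allowed. An appeal to ``uniqueness under contraction'' leaves this step unproved.
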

\begin{proof}
The assertion is immediate for $m=1$, so let $m\ge2$, and put
$K=1+4m(m-1)$. For $a,b\in A$ and $0\le t\le1$, interpolate the
boundary data and the distances:
\begin{align*}
 z_a(t)&=(1-t)a+tf(a),\\
 d_{ab}(t)&=(1-t)\norm{a-b}+t\norm{f(a)-f(b)}.
\end{align*}
Then, for $t,\tau\in[0,1]$,
\[
 \norm{z_a(t)-z_a(\tau)}\le\varepsilon|t-\tau|,\qquad
 |d_{ab}(t)-d_{ab}(\tau)|\le2\varepsilon|t-\tau|,\qquad
 \norm{z_a(t)-z_b(t)}\le d_{ab}(t).
\]
For $t<1$, $d(t)$ is a metric on $A$. For $t=1$, we identify points of
$A$ with the same image under $f$; then $d(1)$ becomes the metric of
$f(A)$, and $z_a(1)=f(a)$. Let $T(t)$ be the single-linkage tree of
$(A,d(t))$, with this identification when $t=1$, let $F_t$ be the
harmonic interpolation of the vectors $z_a(t)$ on it, and let $s(t)$ be
the cut height $s_k(A)$ for the pseudometric $d(t)$. For $a\in A$, let
$p_a(s)$ be the point at height $s$ on the path from the leaf $a$ to the
root, and put
\[
 \Phi_a(t)=F_t\bigl(p_a(s(t))\bigr).
\]
By~\eqref{eq:tree-definition}, $\{\Phi_a(0)\}_{a\in A}=r_k(A)$ and
$\{\Phi_a(1)\}_{a\in A}=r_k(f(A))$. Fix $a$. It suffices to find a
subdivision of $[0,1]$ such that
\begin{equation}\label{eq:tree-step}
 \norm{\Phi_a(t)-\Phi_a(\tau)}\le K\varepsilon|t-\tau|
\end{equation}
whenever $t$ and $\tau$ lie in the closure of one of its intervals;
adding these estimates gives $\norm{\Phi_a(1)-\Phi_a(0)}\le K\varepsilon$.

Choose the subdivision so that on each open interval $J$ the ordering of
the finitely many affine functions $d_{bc}$ is constant. Then the tree
$T(t)$ is the same for all $t\in J$, with ties that persist on $J$
treated as simultaneous mergers; its leaves are the points of $A$, and
each vertex height is one of the functions $d_{bc}$ on $J$. The cut
height $s(t)$ is identically zero or equal to one of these functions on
$J$; it is continuous on $[0,1]$, since
$|s(t)-s(t')|\le2\varepsilon|t-t'|$ by Lemma~\ref{lem:cut-height}(a). Subdivide, if necessary, the edge containing
$p_a(s(t))$, $t\in J$, so that this point becomes a vertex $v_a$. Denote
the resulting tree by $T_J$, and for $t$ in the closure $\overline{J}$ let
$h_t$ be the height function on $T_J$ given by these affine functions,
so that $v_a$ has height $s(t)$.

For $t\in\overline{J}$, contracting the edges of $T_J$ of length zero for
$h_t$ gives $T(t)$, with $v_a$ going to $p_a(s(t))$. Indeed, for
$b,c\in A$, the height $h_t(b\wedge c)$ equals the single-linkage
ultrametric $u_{d(t)}(b,c)$ when $t\in J$, both are continuous in $t$,
and a rooted tree with leaves at height zero and positive edge lengths
is determined, up to vertices with a single child, by the heights of the
least common ancestors of its leaves. Thus $F_t$ is the harmonic
interpolation on $T_J$ for $h_t$, and $\Phi_a(t)=F_t(v_a)$.

Let $t\in\overline{J}$ and $\tau\in J$. Apply Lemma~\ref{lem:tree-comparison}
to $T_J$, which has $m$ leaves, with $h=h_\tau$, $h'=h_t$, and boundary
vectors $z_b(\tau)$ and $z_b(t)$. Vertex heights are affine with slopes
at most $2\varepsilon$ in absolute value, so we may take
$\beta=\varepsilon|t-\tau|$ and $\delta=2\varepsilon|t-\tau|$. If $T(t)$
has $m_t\ge2$ leaves, the triangle inequality for $d(t)$, together with
\eqref{eq:tree-distortion} and~\eqref{eq:tree-tree-distance}, gives
\[
 \norm{z_b(t)-z_c(t)}\le\frac{m_t-1}{2}\,d_{T(t)}(b,c),
\]
so by Lemma~\ref{lem:tree-slopes} the leaf slopes of $F_t$ have norm at
most $m_t-1\le m-1$; if $m_t=1$, they vanish. Hence
\eqref{eq:tree-variation-bound} with $Q=m-1$ gives~\eqref{eq:tree-step}.

If $t$ and $\tau$ are the endpoints of $J$, inequality~\eqref{eq:tree-step} follows by applying the preceding paragraph 
to pairs of points where one is an endpoint of $J$ and the other is its midpoint. 
\end{proof}

\begin{lemma}[Common image]\label{lem:common-image}
Let $A,B\subset X$ be finite sets with $d_H(A,B)\le\varepsilon$. There
are maps $f\colon A\to X$ and $g\colon B\to X$ such that $f(A)=g(B)$
and for all $a\in A$ and $b\in B$ we have $\norm{f(a)-a}\le\varepsilon$ and $\norm{g(b)-b}\le\varepsilon$.
\end{lemma}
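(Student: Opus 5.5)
Since $d_H(A,B)\le\varepsilon$, every point of $A$ is within $\varepsilon$ of some point of $B$ and conversely. The plan is to build a bipartite ``matching'' graph and send each of its components to a single common point. Let $H$ be the bipartite graph on the disjoint union $A\sqcup B$ in which $a\in A$ and $b\in B$ are joined whenever $\norm{a-b}\le\varepsilon$. By the Hausdorff condition every vertex of $H$ has at least one neighbor. However, a component of $H$ may have large diameter, so mapping a whole component to one point would not respect the bound $\varepsilon$. The construction therefore has to be more local.

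The approach I would take is to use \emph{midpoints of edges}. A first attempt is to choose, for each $a\in A$, a neighbor $\phi(a)\in B$, and for each $b\in B$ a neighbor $\psi(b)\in A$. Then define the common image as the set of midpoints $\{(a+\phi(a))/2\}\cup\{(\psi(b)+b)/2\}$. The trouble is that $f(A)$ would then contain only the midpoints of the edges $(a,\phi(a))$, and $g(B)$ only those of $(\psi(b),b)$, so the two images would differ.

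To fix this, I would instead choose a set $E$ of edges of $H$ that covers all vertices, for instance $E=\{(a,\phi(a))\}\cup\{(\psi(b),b)\}$. Each point of the common image is to be the midpoint of an edge in $E$. Two things must then hold: each edge of $E$ must be attained by both $f$ and $g$, and each vertex must be sent to the midpoint of an incident edge of $E$. Midpoints are within $\varepsilon/2\le\varepsilon$ of both endpoints.

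The key combinatorial step is to find an edge set $E$ in which every vertex has degree at least one, and an assignment of each vertex to one incident edge of $E$, such that every edge of $E$ receives at least one $A$-vertex and at least one $B$-vertex. Any edge of $E$ receiving both endpoints is fine. So I would take $E$ to be a \emph{minimal} edge cover of $H$, which exists since $H$ has no isolated vertices. A minimal edge cover is a disjoint union of stars, because a path of length $3$ inside it would have a removable middle edge. In a star with center $c$ and leaves $\ell_1,\dots,\ell_p$, each leaf lies on the opposite side from $c$. I would send each leaf $\ell_j$ to the midpoint of its edge $c\ell_j$, and send $c$ to the midpoint of one chosen edge, say $c\ell_1$. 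Then each edge $c\ell_j$ is hit from the leaf's side, but for $j\ge2$ it is not hit from $c$'s side.

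To repair this I would change the target points rather than use midpoints. For a star with center $c$, I send every vertex of the star to the single point $c$ itself. Each leaf $\ell_j$ satisfies $\norm{\ell_j-c}\le\varepsilon$, and $c$ moves by $0$. The common image is then the set of star centers. The image of $A$-vertices contains every center: a center in $A$ is hit by itself, and a center in $B$ is hit by its leaves, which lie in $A$. Symmetrically the image of $B$-vertices contains every center. Hence $f(A)=g(B)$ equals the set of centers. An edge that is its own star can be centered at either endpoint.

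The only real obstacle is justifying the star decomposition of a minimal edge cover, which is the standard argument above. Finiteness of $A$ and $B$ guarantees that a minimal edge cover exists.
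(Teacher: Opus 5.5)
Your final construction is correct and is essentially the paper's proof. The paper takes a relation $R\subset A\times B$ with as few pairs as possible (a minimum edge cover of your graph $H$) and notes that no related pair can have both members related to other points, which is your star argument. It then sends every vertex to the star center, which it writes as $c(a,b)$; the midpoint detour in your proposal can simply be dropped.
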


\begin{proof}
Consider relations $R\subset A\times B$ such that $aRb$ implies
$\norm{a-b}\le\varepsilon$, every point of $A$ is related to some point
of $B$, and every point of $B$ is related to some point of $A$. 
Choose such an $R$ with as few pairs as possible. If $aRb$, then 
$a$ and $b$ cannot both be related to other points, since otherwise
the pair $(a,b)$ could be removed from $R$. For $aRb$, put $c(a,b)=a$ if $a$ is related to more than one point, and $c(a, b)=b$ otherwise. 
By the preceding observation, $c(a,b)=b$ whenever $b$ is related to more
than one point. Hence $c(a,b)=c(a,b')$ whenever $aRb$ and $aRb'$, and
$c(a,b)=c(a',b)$ whenever $aRb$ and $a'Rb$. Thus $f(a)=c(a,b)$ and
$g(b)=c(a,b)$, for $aRb$, define maps on $A$ and on $B$, both with image
$C=\{c(a,b):aRb\}$. They move points by at most $\varepsilon$, since
$c(a,b)\in\{a,b\}$.
\end{proof}

\begin{proof}[Proof of Theorem~\ref{thm:direct}]
Let $A,B\in X(n)$ and $\varepsilon=d_H(A,B)$. Let $f$, $g$, and $C=f(A)=g(B)$ be as
in Lemma~\ref{lem:common-image}. Since $|A|,|B|\le n$,
Lemma~\ref{lem:tree-tuple} applied to $f$ and to $g$ gives
\[
 d_H(r_k(A),r_k(B))\le d_H(r_k(A),r_k(C))+d_H(r_k(C),r_k(B))
 \le2\bigl(1+4n(n-1)\bigr)\varepsilon=D_n\varepsilon.
\]
The retraction, convex-hull, and movement assertions were proved
after \eqref{eq:tree-definition}.
\end{proof}

\section{Clustering by collision: proof of Theorem~\ref{thm:main}}\label{sec:consecutive-clustering}

We first outline the construction of a retraction $X(n)\to X(n-1)$. Embed $X$
linearly and isometrically into $\ell^\infty(B_{X^*})$. Given an $n$-point
set $A$, apply the proximal map of the convex function~\eqref{eq:define-Psi} coordinatewise at a scale chosen so that the closest pair(s) of points 
collide in every coordinate at once. The output is $1$-Lipschitz with respect to the input and $(n-1)$-Lipschitz with respect to the scale; 
however, it need not lie in the image of $X$.  
Therefore, each point $x$ is replaced by a convex combination of the points of $A$, with weights decreasing in the distance to $x$. 
This step contributes the logarithmic factor in~\eqref{eq-theorem-main}. 

\subsection{Separation and matching} This short subsection borrows from~\cite[Lemmas 2.5 and 2.6]{Akofor}. 

\begin{definition}\label{def:separation}
The \emph{separation} of $A\in X(n)$ is
\begin{equation}\label{eq:separation}
\delta_n(A)=
\begin{cases}
\displaystyle\min_{\substack{a,b\in A, \ a\ne b}}d(a,b),&|A|=n,\\[4pt]
0,&|A|<n.
\end{cases}
\end{equation}
\end{definition}

\begin{lemma}\label{lem:matching}
Let $A,B\in X(n)$ and $\varepsilon=d_H(A,B)$. Then (a) $|\delta_n(A)-\delta_n(B)|\le2\varepsilon$; 
(b) if $\max\{\delta_n(A),\delta_n(B)\}>2\varepsilon$, then both sets have $n$ points and admit enumerations
$A=\{a_1,\ldots,a_n\}$ and $B=\{b_1,\ldots,b_n\}$ such that
$d(a_i,b_i)\le\varepsilon$ for all $i$.
\end{lemma}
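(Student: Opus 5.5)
For (a), I would use the $s_k$-style argument of Lemma~\ref{lem:cut-height}(b), adapted to the case split in \eqref{eq:separation}. By symmetry it suffices to show $\delta_n(B)\ge\delta_n(A)-2\varepsilon$. We may assume $\delta_n(A)>2\varepsilon$, since otherwise the right side is nonpositive and the claim is trivial. Then $|A|=n$ and distinct points of $A$ are more than $2\varepsilon$ apart.

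The first step is to check that $A$ and $B$ are in bijection through $\varepsilon$-neighbors. Each $b\in B$ has some $a\in A$ with $d(a,b)\le\varepsilon$. This $a$ is unique: two such points of $A$ would lie within $2\varepsilon$ of each other, contradicting $\delta_n(A)>2\varepsilon$. This defines a map $\phi\colon B\to A$. The map $\phi$ is onto, because every $a\in A$ has some $b\in B$ within $\varepsilon$, and then $\phi(b)=a$ by uniqueness. Hence $|B|\ge n$, so $|B|=n$, and $\phi$ is a bijection.

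Next, for distinct $b,b'\in B$ the points $\phi(b)$ and $\phi(b')$ are distinct. The triangle inequality then gives
\[
d(b,b')\ge d(\phi(b),\phi(b'))-2\varepsilon\ge\delta_n(A)-2\varepsilon.
\]
Taking the minimum over pairs yields $\delta_n(B)\ge\delta_n(A)-2\varepsilon$, and (a) follows by symmetry.

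For (b), suppose without loss of generality that $\delta_n(A)>2\varepsilon$. The argument above already shows $|A|=|B|=n$ and produces the bijection $\phi$ with $d(b,\phi(b))\le\varepsilon$. Enumerating $B=\{b_1,\ldots,b_n\}$ and setting $a_i=\phi(b_i)$ gives the required matching. If instead $\delta_n(B)>2\varepsilon$, the same argument applies with $A$ and $B$ interchanged.

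No step is a real obstacle. The only point needing care is the degenerate case $|A|<n$, where $\delta_n(A)=0$. There the hypothesis $\delta_n(A)>2\varepsilon$ fails, so that case is covered only by the trivial inequality in (a). The uniqueness argument also uses $\varepsilon<\delta_n(A)/2$ strictly, which is exactly what the hypothesis provides.
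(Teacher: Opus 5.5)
Your proof is correct and is essentially the paper's argument. The paper picks, for each $a\in A$, an $\varepsilon$-close point of $B$ and notes that this map is injective because $\delta_n(A)>2\varepsilon$; you use the same observation in reverse (uniqueness of the $\varepsilon$-neighbor in $A$, then surjectivity) and obtain the inverse bijection, after which you prove one inequality of (a) and symmetrize, while the paper reads off both inequalities from the $2\varepsilon$ change in pairwise distances.
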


\begin{proof} If $\max\{\delta_n(A),\delta_n(B)\}\le 2\varepsilon$, then (a) is trivial. Suppose that $\delta_n(A)>2\varepsilon$. For each $a\in A$, choose
a point of $B$ within distance $\varepsilon$. Distinct points of $A$
must have distinct choices, so $|B|=n$ and these choices give a bijection required by (b). Under this bijection every pairwise distance changes by at most $2\varepsilon$, which implies (a). 
\end{proof}

\subsection{The proximal map} The map $P_t$ used below is the scalar, equal-weight case of
the convex-fusion clustering objective studied
in~\cite{HockingEtAl}. 
To define it, fix $n\ge2$ and set
\begin{equation}\label{eq:define-Psi}
\Psi(u)=\sum_{1\le i<j\le n}|u_i-u_j|,
\qquad u\in\R^n.    
\end{equation}
For $t\ge0$ and $a\in\R^n$, define
\begin{equation}\label{eq:prox}
P_t(a)=\argmin_{u\in\R^n}
\left\{\frac12\sum_{i=1}^n(u_i-a_i)^2+t\Psi(u)\right\}.
\end{equation}
The objective function in~\eqref{eq:prox} is coercive and strictly convex, so its minimizer exists
and is unique. In particular, $P_0$ is the identity. Permuting the
coordinates of $a$ permutes those of $P_t(a)$ in the same way.

\begin{lemma}\label{lem:scalar}
For all $a,b\in\R^n$ and $t,s\ge0$,
\begin{align}
\norm{P_t(a)-P_s(b)}_\infty
&\le\norm{a-b}_\infty+(n-1)|t-s|,
\label{eq:jointlip}\\
\norm{P_t(a)-a}_\infty&\le(n-1)t.
\label{eq:movement}
\end{align}
Moreover, for every $i,j$,
\begin{equation}\label{eq:collapse}
|a_i-a_j|\le2t
\quad\Longrightarrow\quad
(P_t(a))_i=(P_t(a))_j.
\end{equation}
\end{lemma}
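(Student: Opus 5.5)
The plan is to use the optimality conditions for~\eqref{eq:prox}, written in terms of subgradients of $\Psi$, and to prove the three assertions in the order~\eqref{eq:collapse}, \eqref{eq:movement}, \eqref{eq:jointlip}.

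Put $u=P_t(a)$. Then
\[
 a_i-u_i=t\sum_{j\ne i}\sigma_{ij},
\]
where $\sigma_{ij}\in[-1,1]$, $\sigma_{ij}=-\sigma_{ji}$, and $\sigma_{ij}=\operatorname{sign}(u_i-u_j)$ whenever $u_i\ne u_j$.

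\textbf{Movement.} Each $|a_i-u_i|$ is at most $t(n-1)$, which is~\eqref{eq:movement}.

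\textbf{Collapse and order.} First I will show that $P_t$ preserves order: $a_i\le a_j$ implies $u_i\le u_j$. If $a_i<a_j$ but $u_i>u_j$, swapping $u_i$ and $u_j$ leaves $\Psi$ unchanged and strictly decreases the quadratic term, which contradicts uniqueness. If $a_i=a_j$, symmetry and uniqueness give $u_i=u_j$. Now suppose $a_i<a_j$, $|a_i-a_j|\le2t$, and $u_i<u_j$. Every $k$ then falls into one of three classes: $u_k\le u_i$, $u_i<u_k<u_j$, or $u_k\ge u_j$. Subtracting the two optimality equations gives
\[
 (a_j-u_j)-(a_i-u_i)=t\sum_k(\sigma_{jk}-\sigma_{ik}).
\]
I will compare term by term. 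For $k=i,j$ the pair contributes $t(\sigma_{ji}-\sigma_{ij})=2t$. For every other $k$ we have $\sigma_{jk}\ge\sigma_{ik}$, because $u_j>u_i$ and $\sigma$ is monotone in this sense; this needs care when $u_k$ equals $u_i$ or $u_j$. Hence the left side is at least $2t$. That gives $u_j-u_i\le a_j-a_i-2t\le0$, a contradiction. The delicate point is the tie case, where the $\sigma$'s are only subgradients.

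To avoid subgradient selection issues I would instead use the strict "cluster" monotonicity via a perturbation argument. Replace $u_j$ by $u_j-\epsilon$ and $u_i$ by $u_i+\epsilon$ for small $\epsilon>0$. The quadratic term then changes by $\epsilon\bigl((u_i-a_i)-(u_j-a_j)\bigr)+O(\epsilon^2)$. $\Psi$ decreases by at least $2\epsilon$ from the pair $(i,j)$, and does not increase from the others, since moving two points toward each other does not increase $\sum_k\bigl(|u_i-u_k|+|u_j-u_k|\bigr)$ for small $\epsilon$. The first-order change is therefore at most $\epsilon\bigl((u_i-u_j)+(a_j-a_i)-2t\bigr)<0$, which contradicts minimality. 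This gives~\eqref{eq:collapse} cleanly.

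\textbf{Joint Lipschitz bound.} I would split~\eqref{eq:jointlip} into two inequalities.
\begin{itemize}[nosep]
 \item[(i)] $\norm{P_t(a)-P_t(b)}_\infty\le\norm{a-b}_\infty$.
 \item[(ii)] $\norm{P_t(a)-P_s(a)}_\infty\le(n-1)|t-s|$.
\end{itemize}
For (i), I will use the standard fact that the proximal map of a convex function which is invariant under adding constants and is a sum of functions of differences, here a "submodular" total variation on the complete graph, is monotone and commutes with translation along $(1,\dots,1)$. Monotonicity, meaning $a\le b$ coordinatewise implies $P_t(a)\le P_t(b)$, holds because $\Psi$ is submodular on the lattice $\R^n$: $|u_i-u_j|$ is a convex function of $u_i-u_j$. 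So a lattice argument applies: compare the objective at $\min(u,v)$ and $\max(u,v)$ and use uniqueness. Translation equivariance is $P_t(a+c\mathbf 1)=P_t(a)+c\mathbf 1$. Together these give the $\ell^\infty$ nonexpansiveness (i), by comparing $a$ with $b\pm\norm{a-b}_\infty\mathbf1$.

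For (ii), I will use the semigroup-like structure. The cleaner route is: for $s<t$, the point $P_s(a)$ satisfies $P_t(a)=P_{t-s}'$ of something? That does not hold in general, so I prefer a direct argument. Set $u=P_t(a)$ and $v=P_s(a)$. Then $u$ is also the prox at scale $s$ of the point $a'=u+s\xi$, where $\xi=(a-u)/t$ is the subgradient, since $a-u=t\xi$ with $\xi\in\partial\Psi(u)$. We have $\norm{a-a'}_\infty=|t-s|\norm{\xi}_\infty\le(n-1)|t-s|$. Applying (i) at scale $s$ gives $\norm{v-u}_\infty\le(n-1)|t-s|$.

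Combining (i) and (ii) by the triangle inequality yields~\eqref{eq:jointlip}. I expect the lattice-monotonicity step in (i) to be the main technical point, and I would verify the submodular inequality $\Psi(\min(u,v))+\Psi(\max(u,v))\le\Psi(u)+\Psi(v)$ pairwise.
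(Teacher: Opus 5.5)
Your proof is correct. For \eqref{eq:collapse} it is the paper's argument: your perturbation $u_i\mapsto u_i+\epsilon$, $u_j\mapsto u_j-\epsilon$ is the directional derivative $\Phi'(u;e_i-e_j)$ used there. The order-preservation detour is unnecessary, and your worry about ties in the subgradient version is unfounded, since $\sigma_{jk}\ge\sigma_{ik}$ holds in every case once $u_i<u_j$.

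For \eqref{eq:jointlip} you take a genuinely different route. The paper argues in one stroke. It lets $E$ be the set where $u_i-v_i$ attains its maximum $M$ and adds the first-order conditions $\Phi'(u;-\mathbf 1_E)\ge0$ and $\tilde\Phi'(v;\mathbf 1_E)\ge0$. It then checks that each pair $i\in E$, $j\notin E$ contributes at most $\abs{t-s}$. This treats the change of data and the change of scale simultaneously, and \eqref{eq:movement} drops out as the case $b=a$, $s=0$.

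You instead split the estimate into two steps:
\begin{enumerate}[label=(\roman*)]
\item $\ell^\infty$-nonexpansiveness at fixed scale, from lattice monotonicity of the prox (submodularity of each $\abs{u_i-u_j}$, plus the quadratic cross inequality $2(u_i-v_i)(b_i-a_i)\ge0$ where $u_i>v_i$) together with translation equivariance;
\item the re-basing identity $P_t(a)=P_s\bigl(a+(s-t)\xi\bigr)$ with $\xi=(a-P_t(a))/t\in\partial\Psi(P_t(a))$ and $\norm{\xi}_\infty\le n-1$, which converts a change of scale into a change of data.
\end{enumerate}
The only thing to patch is that $\xi$ requires $t>0$; if $t=0$, interchange the roles of $t$ and $s$.

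The paper's route is shorter and needs nothing beyond one-sided derivatives. With $t=s$, its $E$-argument is precisely a direct proof of the one-sided bound $\max_i(u_i-v_i)\le\max_i(a_i-b_i)$, which your step (i) extracts from monotonicity and translation equivariance. Your route is more modular and more general. Step (i) holds for the prox of any translation-invariant submodular convex penalty, such as weighted sums of convex functions of $u_i-u_j$. Step (ii) then bounds the dependence on scale by the $\ell^\infty$ size of the penalty's subgradients.
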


\begin{proof}
Write $u=P_t(a)$, and let $\Phi$ be the objective in~\eqref{eq:prox}.
For $w\in\R^n$, the one-sided directional derivative
\[
\Phi'(u;w)=\lim_{\theta\to0^+}\frac{\Phi(u+\theta w)-\Phi(u)}{\theta}
\]
exists since $\Phi$ is convex, and $\Phi'(u;w)\ge0$ since $u$ is a
minimizer. 

For~\eqref{eq:collapse}, suppose that $u_i<u_j$, and let $w=e_i-e_j$.
The term $|u_i-u_j|$ contributes $-2$ to $\Psi'(u;w)$. For
$l\notin\{i,j\}$, the terms $|u_i-u_l|$ and $|u_l-u_j|$ contribute
$\pm1$ each, with $+1$ only if $u_i\ge u_l$, respectively $u_l\ge u_j$.
Since $u_i<u_j$, at most one of them is $+1$. Hence $\Psi'(u;w)\le-2$, and
\[
0\le\Phi'(u;w)\le(u_i-a_i)-(u_j-a_j)-2t,
\]
that is, $a_j-a_i\ge2t+(u_j-u_i)>2t$. Interchanging $i$ and $j$
proves~\eqref{eq:collapse}.

For~\eqref{eq:jointlip}, let $v=P_s(b)$, let $\tilde\Phi$ be the
objective defining $v$, and put
\[
M=\max_i(u_i-v_i),\qquad E=\{i:u_i-v_i=M\}.
\]
Let $\boldsymbol 1_E$ be the indicator vector of $E$. The quadratic terms
of $\Phi'(u;-\boldsymbol 1_E)$ and $\tilde\Phi'(v;\boldsymbol 1_E)$ add up
to $-\sum_{i\in E}\bigl(M-(a_i-b_i)\bigr)$. In $\Psi$, only the terms
with $i\in E$ and $j\notin E$ contribute. Such a term $|u_i-u_j|$
contributes $t$ times $\pm1$, with $+1$ only if $u_i\le u_j$, and the
corresponding term $|v_i-v_j|$ contributes $s$ times $\pm1$, with $+1$
only if $v_i\ge v_j$. Since $u_i-u_j>v_i-v_j$, at most one of them is
$+1$, so together they contribute at most $|t-s|$. There are at most
$|E|(n-1)$ such pairs. Adding $\Phi'(u;-\boldsymbol 1_E)\ge0$ and
$\tilde\Phi'(v;\boldsymbol 1_E)\ge0$ therefore gives
\[
|E|M\le\sum_{i\in E}(a_i-b_i)+|E|(n-1)|t-s|
\le|E|\bigl(\norm{a-b}_\infty+(n-1)|t-s|\bigr).
\]
Interchanging $(a,t)$ and $(b,s)$ proves~\eqref{eq:jointlip}. Finally,
\eqref{eq:movement} is the case $b=a$, $s=0$ of~\eqref{eq:jointlip},
since $P_0$ is the identity.
\end{proof}

\subsection{Coordinatewise collision}

Let $\Gamma$ be any nonempty index set and $V=\ell^\infty(\Gamma)$. For the rest of the section the vector norms are $\ell^\infty$ norms. 
For an ordered tuple $x=(x_1,\ldots,x_n)\in V^n$, define
$\mathcal P_t(x)=(y_1,\ldots,y_n)$ coordinatewise by
\begin{equation}\label{eq:coordinate}
\bigl(y_1(\gamma),\ldots,y_n(\gamma)\bigr)
=P_t\bigl(x_1(\gamma),\ldots,x_n(\gamma)\bigr),
\qquad \gamma\in\Gamma.
\end{equation}
By~\eqref{eq:movement}, the $y_i$ are bounded functions. Lemma~\ref{lem:scalar}
implies
\begin{align}
\max_i\norm{y_i-x_i} &\le(n-1)t,
\label{eq:vector-movement}\\
\max_i\norm{\mathcal P_t(x)_i-\mathcal P_s(x')_i} 
&\le\max_i\norm{x_i-x'_i} +(n-1)|t-s|.
\label{eq:vector-joint}
\end{align}
Furthermore,~\eqref{eq:collapse}  gives the collision property
\begin{equation}\label{eq:vector-collision}
\norm{x_i-x_j} \le2t
\quad\Longrightarrow\quad y_i=y_j.
\end{equation}

We will use the above properties of $\mathcal P_t$ to obtain convex-hull-preserving 
retractions $X(n)\to X(n-1)$ in a general normed space $X$. Toward that, recall that 
$X$ embeds linearly and isometrically into $V$ for some index set $\Gamma$, e.g.,
\(\Gamma=B_{X^*}\). Thus, we consider $X$ to be a (not necessarily closed) subspace of $V$.  

\subsection{The averaging map} Fix \(n\ge2\). The averaging construction uses the parameters
\[
c=\frac{n-1}{2},\quad b=\frac{3n-1}{2},\quad
\lambda=\frac{\log(n+1)}{n},
\]
and the radial weight function 
\[
\varphi(r)=e^{\lambda\pos{b-r}}-1, \quad r\ge0.
\]
This function is nonincreasing, vanishes on \([b,\infty)\), and satisfies
\(\varphi(c)=e^{\lambda n}-1=n\). Since \(\varphi'=-\lambda(\varphi+1)\) on
\([0,b)\) and \(\varphi\) is constant on \([b,\infty)\), the mean value
theorem gives
\begin{equation}\label{eq:profile}
\abs{\varphi(r)-\varphi(r')}\le
\lambda\bigl(\max\{\varphi(r),\varphi(r')\}+1\bigr)\abs{r-r'},
\qquad r,r'\ge0 .
\end{equation}

For a tuple \(x=(x_1,\ldots,x_n)\in X^n\) and a scale \(s>0\), set
$(y_1,\ldots,y_n)=\mathcal P_{s/2}(x_1,\ldots,x_n)$, and define
\begin{equation}\label{eq:transfer-weights}
w_{ij}=\varphi\Bigl(\frac{\norm{y_i-x_j} }{s}\Bigr),\quad
W_i=\sum_{j=1}^nw_{ij},\quad p_{ij}=\frac{w_{ij}}{W_i},\quad
T_s(x)_i=\sum_{j=1}^np_{ij}x_j.
\end{equation}
Here $W_i>0$ by \eqref{eq:transfer-mass} below, so division by $W_i$ makes sense. 
The point \(T_s(x)_i\) is an average of the points \(x_j\), weighted according
to the distance of \(x_j\) from \(y_i\). 

\begin{lemma}\label{lem:local-transfer}
The map \(T_s\) commutes with permutations of the labels and satisfies
\begin{align}
T_s(x)_i&\in\conv\{x_1,\ldots,x_n\},\label{eq:transfer-convex}\\
\norm{T_s(x)_i-x_i}&\le(2n-1)s,\label{eq:transfer-move}\\
\norm{x_i-x_j}\le s&\ \Longrightarrow\
T_s(x)_i=T_s(x)_j.\label{eq:transfer-collision}
\end{align}
If
$\max_i\norm{x_i-x'_i}\le\varepsilon$, 
$|s-s'|\le2\varepsilon$, and $\max\{s,s'\}>2\varepsilon$, then
\begin{equation}\label{eq:transfer-lip}
\max_i\norm{T_s(x)_i-T_{s'}(x')_i}\le C_n\varepsilon.
\end{equation}
\end{lemma}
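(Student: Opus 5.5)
The plan is to derive everything from the properties \eqref{eq:vector-movement}--\eqref{eq:vector-collision} of $\mathcal P_{s/2}$ and the profile inequality \eqref{eq:profile}. The one-dimensional statements are quick; the Lipschitz estimate \eqref{eq:transfer-lip} takes the real work.

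\emph{Equivariance, convexity, mass, movement, collision.} Permutation equivariance holds because $P_t$, and hence $\mathcal P_{s/2}$, commutes with permutations, and the weights are built symmetrically from it. Convexity \eqref{eq:transfer-convex} is immediate, since $p_{ij}\ge0$ and $\sum_jp_{ij}=1$. By \eqref{eq:vector-movement} with $t=s/2$ we get $\norm{y_i-x_i}\le cs$. Since $\varphi$ is nonincreasing, this gives $w_{ii}\ge\varphi(c)=n$, so $W_i\ge n>0$; this is presumably the mass bound \eqref{eq:transfer-mass}. If $p_{ij}>0$, then $\norm{y_i-x_j}<bs$, so
\[
\norm{x_j-x_i}<(b+c)s=(2n-1)s .
\]
Averaging this over $j$ gives \eqref{eq:transfer-move}. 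For \eqref{eq:transfer-collision}: if $\norm{x_i-x_j}\le s=2(s/2)$, then $y_i=y_j$ by \eqref{eq:vector-collision}. Rows $i$ and $j$ of the weight matrix then coincide, and so $T_s(x)_i=T_s(x)_j$.

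\emph{Lipschitz estimate, setup.} Assume without loss of generality that $s\ge s'$, so $s>2\varepsilon$. Write $y'=\mathcal P_{s'/2}(x')$, $w'_{ij}$, $W'_i$, $p'_{ij}$ for the primed quantities. By \eqref{eq:vector-joint},
\[
\norm{y_i-y'_i}\le\varepsilon+(n-1)\varepsilon=n\varepsilon .
\]
Put $d_{ij}=\norm{y_i-x_j}$ and $d'_{ij}=\norm{y'_i-x'_j}$, so $|d_{ij}-d'_{ij}|\le(n+1)\varepsilon$. Because $\varphi$ is constant on $[b,\infty)$, I replace the arguments by the truncations $r=\min\{d/s,b\}$ and $r'=\min\{d'/s',b\}$. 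A short case check then gives
\[
|r-r'|\le\frac{|d-d'|}{s}+\min\Bigl\{\frac{d'}{s'},b\Bigr\}\frac{s-s'}{s}\le\frac{(n+1+2b)\varepsilon}{s}.
\]
The checks are: both untruncated; one truncated, where monotonicity in each variable is used; and $d'/s'$ large with $d/s<b$, handled by comparing against $b$.

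\emph{Lipschitz estimate, decomposition.} Decompose
\[
T_s(x)_i-T_{s'}(x')_i=\sum_jp_{ij}(x_j-x'_j)+\sum_j(p_{ij}-p'_{ij})(x'_j-x'_i).
\]
The first sum has norm at most $\varepsilon$. In the second, only indices $j$ with $p_{ij}>0$ or $p'_{ij}>0$ contribute. For these, the movement argument gives $\norm{x'_j-x'_i}\le(2n-1)s+2\varepsilon\le 2ns$. Next,
\[
\sum_j|p_{ij}-p'_{ij}|\le\frac{2\sum_j|w_{ij}-w'_{ij}|}{\max\{W_i,W'_i\}}.
\]
By \eqref{eq:profile},
\[
\sum_j|w_{ij}-w'_{ij}|\le\lambda\bigl(W_i+W'_i+n\bigr)\max_j|r_j-r'_j| .
\]
Since $W_i,W'_i\ge n$, this yields $\sum_j|p_{ij}-p'_{ij}|\le 6\lambda\max_j|r_j-r'_j|$. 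Multiplying the three bounds, the second sum is at most $6\lambda\cdot 2ns\cdot(n+1+2b)\varepsilon/s$. Since $n+1+2b=4n$, this equals $48\lambda n^2\varepsilon=48n\log(n+1)\,\varepsilon$. Adding the first sum gives $C_n\varepsilon$.

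The step I expect to be the main obstacle is this Lipschitz estimate when the two scales differ: $s'$ may be much smaller than $s$, and the factor $1/s'$ must not appear. The plan handles this by truncating the arguments at $b$, so that $\varphi$ only sees values below $b$. It then uses the larger scale $s$ in every denominator, and bounds the support distances $\norm{x'_j-x'_i}$ by a multiple of $s$. The remaining constants should fit inside $48$, though the case analysis for $|r-r'|$ has to be done with care.
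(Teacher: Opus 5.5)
Your proof is correct and follows essentially the same route as the paper. Both arguments use the mass bound $W_i\ge n$, the truncated comparison $|r_j-r'_j|\le 4n\varepsilon/s$ at the larger scale $s$, the profile inequality giving $\sum_j|p_{ij}-p'_{ij}|\le 6\lambda\cdot 4n\varepsilon/s$, and the support bound $2ns$ inside a mean-zero decomposition; yours is just the mirror image of the paper's, with $p_{ij}$ and $x'_j-x'_i$ in place of $p'_{ij}$ and $x_j-x_i$.

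Two small points should be filled in. First, you assert the normalization inequality $\sum_j|w_j/W-w'_j/W'|\le 2\sum_j|w_j-w'_j|/\max\{W,W'\}$ without proof. It takes one line: assume $W\ge W'$ and write $w_j/W-w'_j/W'=(w_j-w'_j)/W-w'_j(1/W'-1/W)$. Second, you should state explicitly that $s'\ge s-2\varepsilon>0$, so that $T_{s'}$ is defined and $W'_i\ge n$.
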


\begin{proof}
By~\eqref{eq:vector-movement}, \(\norm{y_i-x_i} \le cs\). Since
\(\varphi\) is nonincreasing,
\begin{equation}\label{eq:transfer-mass}
W_i\ge w_{ii}\ge\varphi(c)=n. 
\end{equation}
Thus $T_s$ is well-defined. Permutation equivariance and the convex hull property~\eqref{eq:transfer-convex} 
hold by the formula for $T_s$. If \(w_{ij}>0\), then $\norm{y_i-x_j}< bs$, hence 
\begin{equation}\label{eq:transfer-support}
\norm{x_j-x_i}\le\norm{x_j-y_i} +\norm{y_i-x_i} 
<(b+c)s=(2n-1)s,
\end{equation}
which proves~\eqref{eq:transfer-move}. If \(\norm{x_i-x_j}\le s\), then
\(y_i=y_j\) by~\eqref{eq:vector-collision}, and~\eqref{eq:transfer-collision} follows.

For~\eqref{eq:transfer-lip}, we may fix \(i\) and assume \(s\ge s'\). Then
\(s>2\varepsilon\) and \(s'\ge s-2\varepsilon>0\). Below we use primes for the
quantities defined from \(x'\) and \(s'\).

\smallskip\noindent
\textsc{Step 1: comparing distances.}
Let \(R_j=\norm{y_i-x_j} \) and \(R'_j=\norm{y'_i-x'_j} \).
By~\eqref{eq:vector-joint}, \(\norm{y_i-y'_i} \le\varepsilon+c(s-s')\),
hence \(\abs{R_j-R'_j}\le2\varepsilon+c(s-s')\). Since \(\varphi\) is
constant on \([b,\infty)\), we have \(w_{ij}=\varphi(m_j)\) and
\(w'_{ij}=\varphi(m'_j)\), where
\[
m_j=\min\{R_j/s,\,b\},\qquad m'_j=\min\{R'_j/s',\,b\}.
\]
Replacing \(s'\) by \(s\) in \(m'_j\) decreases it by at most
\(b(s-s')/s\): if \(R'_j\le bs'\), the decrease is
\(R'_j(1/s'-1/s)\le b(s-s')/s\); otherwise \(m'_j=b\) and
\(\min\{R'_j/s,b\}\ge bs'/s\). Since \(t\mapsto\min\{t,b\}\) is
\(1\)-Lipschitz, it follows that
\begin{equation}\label{eq:transfer-rho}
\abs{m_j-m'_j}
\le\frac{2\varepsilon+(b+c)(s-s')}{s}
\le\frac{4n\varepsilon}{s},
\end{equation}
using \(b+c=2n-1\) and \(s-s'\le2\varepsilon\).

\smallskip\noindent
\textsc{Step 2: comparing weights.}
By~\eqref{eq:profile} and~\eqref{eq:transfer-rho},
\(\abs{w_{ij}-w'_{ij}}\le\lambda(w_{ij}+w'_{ij}+1)\,4n\varepsilon/s\).
Summing over \(j\) and using~\eqref{eq:transfer-mass} for both tuples,
\begin{equation}\label{ineq:sumw}
\sum_j\abs{w_{ij}-w'_{ij}}
\le\lambda(W_i+W'_i+n)\frac{4n\varepsilon}{s}
\le12\log(n+1)\frac{\varepsilon}{s}\,\max\{W_i,W'_i\}.
\end{equation}
For nonnegative vectors \(w,w'\) with positive sums \(W,W'\) we have
\begin{equation}\label{ineq:algebra}
\sum_j\Bigl|\frac{w_j}{W}-\frac{w'_j}{W'}\Bigr|
\le\frac{2}{\max\{W,W'\}}\sum_j\abs{w_j-w'_j}.
\end{equation}
Indeed, both sides of~\eqref{ineq:algebra} are symmetric in \((w,W)\) and \((w',W')\), so we may
assume \(W\ge W'\). Writing
\(w_j/W-w'_j/W'=(w_j-w'_j)/W-w'_j(1/W'-1/W)\), we obtain
\[
\sum_j\Bigl|\frac{w_j}{W}-\frac{w'_j}{W'}\Bigr|
\le\frac1W\Bigl(\sum_j\abs{w_j-w'_j}+W-W'\Bigr)
\le\frac2W\sum_j\abs{w_j-w'_j}.
\]

Combining~\eqref{ineq:sumw} and~\eqref{ineq:algebra} yields
\begin{equation}\label{eq:transfer-probability}
\sum_j\abs{p_{ij}-p'_{ij}}\le24\log(n+1)\frac{\varepsilon}{s}.
\end{equation}

\smallskip\noindent
\textsc{Step 3: comparing averages.}
If \(w_{ij}>0\), then \(\norm{x_j-x_i}<(2n-1)s\)
by~\eqref{eq:transfer-support}. If \(w'_{ij}>0\), then likewise
\(\norm{x'_j-x'_i}<(2n-1)s'\), so
\(\norm{x_j-x_i}<(2n-1)s+2\varepsilon<2ns\). Since
\(\sum_j(p_{ij}-p'_{ij})=0\),
\[
T_s(x)_i-T_{s'}(x')_i=\sum_j(p_{ij}-p'_{ij})(x_j-x_i)
+\sum_jp'_{ij}(x_j-x'_j),
\]
and~\eqref{eq:transfer-probability} gives
\[
\norm{T_s(x)_i-T_{s'}(x')_i}
\le2ns\sum_j\abs{p_{ij}-p'_{ij}}+\varepsilon
\le\bigl(1+48n\log(n+1)\bigr)\varepsilon=C_n\varepsilon.
\qedhere
\]
\end{proof}

\begin{proof}[Proof of Theorem~\ref{thm:main}]
Set \(r_n(A)=A\) when \(|A|<n\).
If \(A=\{x_1,\ldots,x_n\}\), put
\[
r_n(A)=\{T_{\delta_n(A)}(x)_1,\ldots,T_{\delta_n(A)}(x)_n\}.
\]
This is independent of the enumeration. A pair realizing
\(\delta_n(A)\) collides by \eqref{eq:transfer-collision}, so the set $r_n(A)$
has at most \(n-1\) points. It lies in the convex hull of \(A\) and satisfies
\begin{equation}\label{eq:normed-movement}
d_H(r_n(A),A)\le(2n-1)\delta_n(A).
\end{equation}

Let \(\varepsilon=d_H(A,B)\). If $\max(\delta_n(A), \delta_n(B)) \le 2\varepsilon\), 
inequality~\eqref{eq:normed-movement} yields 
\[
d_H(r_n(A),r_n(B))\le(8n-3)\varepsilon\le C_n\varepsilon.
\]
Otherwise Lemma~\ref{lem:matching} gives enumerations of the two
\(n\)-point sets with matching distances at most \(\varepsilon\), and
\(|\delta_n(A)-\delta_n(B)|\le2\varepsilon\). Applying
\eqref{eq:transfer-lip} to these enumerations with \(s=\delta_n(A)\) and
\(s'=\delta_n(B)\) yields  $\Lip (r_n) \le C_n$.  

The $1$-Lipschitz property of the midpoint retraction \(X(2)\to X(1) \) is straightforward.
\end{proof}

The following is a corollary of Theorems~\ref{thm:direct} and~\ref{thm:main}. 

\begin{corollary}\label{cor:convex} 
Recall the constants $B_{n, k}$ from \eqref{eq:combined}. 
For $1\le k <n$, every nonempty convex subset $Y$ of any normed space 
admits a $B_{n, k}$-Lipschitz retraction $r_{n, k}\colon Y(n)\to Y(k)$ 
whose output lies in the convex hull of the input.
\end{corollary}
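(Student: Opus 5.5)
The plan is to restrict the retractions of Theorems~\ref{thm:direct} and~\ref{thm:main} (and the midpoint map for $n=2$) to the subspace $Y(n)\subset X(n)$. Here $X$ is the ambient normed space containing $Y$. The only thing to check is that these maps send $Y(n)$ into $Y(k)$.

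First, I will fix $1\le k<n$ and choose the map. If $k\le n-2$, take the retraction $r$ of Theorem~\ref{thm:direct}, with constant $D_n$. If $k=n-1$ and $n\ge3$, take whichever of Theorems~\ref{thm:direct} and~\ref{thm:main} gives the smaller constant, namely $\min\{D_n,C_n\}$. If $n=2$, take the midpoint map $\{a,b\}\mapsto\{(a+b)/2\}$, which has constant $1$. In every case $\Lip(r)\le B_{n,k}$ by~\eqref{eq:combined}, and $r(A)\subset\conv A$ for all $A\in X(n)$. For the midpoint map this inclusion is obvious.

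Second, I will show that $r(Y(n))\subset Y(k)$. For $A\in Y(n)$ we have $A\subset Y$, and $Y$ is convex, so $\conv A\subset Y$. Hence $r(A)\subset Y$. Since $r(A)$ is nonempty with at most $k$ points, $r(A)\in Y(k)$.

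Third, I will note that the retraction property and the Lipschitz bound pass to the restriction. If $|A|\le k$ then $r(A)=A$. The Hausdorff metric on $Y(n)$ is the restriction of the one on $X(n)$, so the restriction $r_{n,k}=r|_{Y(n)}$ still satisfies $\Lip(r_{n,k})\le B_{n,k}$.

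There is no real obstacle here. The one point that needs attention is that the convex-hull property of the retractions is exactly what keeps the output inside $Y$.
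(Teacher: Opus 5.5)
Your proposal is correct and follows the same approach as the paper, which states this corollary without a separate proof as an immediate consequence of Theorems~\ref{thm:direct} and~\ref{thm:main} and the midpoint map. The key point, as you note, is that $\conv A\subset Y$ whenever $A\subset Y$, so these convex-hull-preserving retractions of $X(n)$ restrict to retractions $Y(n)\to Y(k)$ with the same constant $B_{n,k}$.
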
 

\section{Clustering via barycenters: proof of Theorem~\ref{thm:barycenter-intro}}\label{sec:barycenter}

Let $\Pfin(X)$ denote the space of finitely supported probability measures on a
nonempty metric space $X$, equipped with the $1$-Wasserstein distance
\[
W_1(\mu,\nu)=\inf_{\pi\in\Pi(\mu,\nu)}
\int_{X\times X}d(x,y)\,d\pi(x,y),
\]
where $\Pi(\mu,\nu)$ is the set of couplings of $\mu,\nu$.
\begin{definition}\label{def:barycenter}
A \emph{$K$-Lipschitz barycenter} on $X$ is a map
$\beta\colon \Pfin(X)\to X$ such that
\begin{equation}\label{eq:barycenter}
\beta(\delta_x)=x,\qquad
d(\beta(\mu),\beta(\nu))\le K W_1(\mu,\nu).
\end{equation}
\end{definition}

Here $\delta_x$ is the Dirac probability measure at $x$; it is unrelated
to the separation $\delta_n$ from Definition~\ref{def:separation}. In the terminology of Mendel and Naor~\cite{MendelNaor},
the existence of such a map $\beta$ means that $X$ is
$W_1$-barycentric with constant $K$.

\begin{lemma}\label{lem:probability-convex}
For every metric space $X$, the space $(\Pfin(X),W_1)$ is isometric to a
convex subset of a real normed space.
\end{lemma}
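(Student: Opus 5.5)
The plan is to realize $(\Pfin(X),W_1)$ inside the Arens--Eells (Lipschitz-free) space over $X$, a standard Kantorovich--Rubinstein type construction. First fix a base point $x_0\in X$. Let $E$ be the real vector space of finitely supported signed measures $\mu$ on $X$ with $\mu(X)=0$; I will call these molecules. Equip $E$ with the norm
\[
 \norm{\mu}_{E}=\inf\Bigl\{\sum_k |c_k|\,d(p_k,q_k):\ \mu=\sum_k c_k(\delta_{p_k}-\delta_{q_k})\Bigr\}.
\]
This is a seminorm by construction. To see that it is a norm, note that each $1$-Lipschitz $f\colon X\to\R$ gives $\abs{\int f\,d\mu}\le\norm{\mu}_E$. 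Pairing a nonzero molecule with the $1$-Lipschitz function $f(x)=d(x,p)$, or a truncation of it, where $p$ is a support point, then shows that $\norm{\mu}_E>0$.

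Next I define the map $J\colon\Pfin(X)\to E$ by $J(\mu)=\mu-\delta_{x_0}$. Its image is convex, because $J((1-t)\mu+t\nu)=(1-t)J(\mu)+tJ(\nu)$; the image is in fact the affine set of molecules of the form $\mu-\delta_{x_0}$ with $\mu\ge0$ a probability measure. Also $J(\mu)-J(\nu)=\mu-\nu$. So the remaining task is the identity $\norm{\mu-\nu}_E=W_1(\mu,\nu)$ for $\mu,\nu\in\Pfin(X)$.

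For $\le$: take a coupling $\pi$ of $\mu$ and $\nu$. We may take $\pi$ finitely supported on $\supp\mu\times\supp\nu$, because the transport problem between finitely supported measures is a finite linear program whose optimum is attained. Then
\[
 \mu-\nu=\sum_{(x,y)}\pi(x,y)(\delta_x-\delta_y),
\]
which gives $\norm{\mu-\nu}_E\le\int d\,d\pi$. Taking the infimum over couplings gives $\norm{\mu-\nu}_E\le W_1(\mu,\nu)$.

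For $\ge$, which I expect to be the main obstacle, I see two routes.
\begin{itemize}[nosep]
 \item The first uses finite LP duality (Kantorovich--Rubinstein on the finite set $S=\supp\mu\cup\supp\nu$). It gives a $1$-Lipschitz $f$ on $S$ with $\int f\,d(\mu-\nu)=W_1(\mu,\nu)$. McShane extension makes $f$ $1$-Lipschitz on all of $X$. Then $W_1(\mu,\nu)=\int f\,d(\mu-\nu)\le\norm{\mu-\nu}_E$ by the pairing bound.
 \item The second avoids duality. Given any representation $\mu-\nu=\sum_k c_k(\delta_{p_k}-\delta_{q_k})$, orient each term so that $c_k>0$. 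This gives a nonnegative flow whose net divergence is $\mu-\nu$. A standard flow decomposition into paths from $\supp\mu$ to $\supp\nu$ (cancelling cycles only decreases cost), combined with the triangle inequality along each path, yields a coupling of cost at most $\sum_k c_k d(p_k,q_k)$.
\end{itemize}
I would present the duality route briefly, with the flow argument as a remark.

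Finally, take $E$ as the normed space and $J(\Pfin(X))$ as the convex subset. The map $J$ is an isometry by the equality just established, which proves the lemma. Completeness is not needed, consistent with the paper's convention.
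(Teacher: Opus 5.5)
Your proof is correct and essentially the paper's: the same affine embedding $\mu\mapsto\mu-\delta_{x_0}$ into finitely supported signed measures of mass zero, with finite Kantorovich--Rubinstein duality on $S=\supp\mu\cup\supp\nu$ plus McShane extension as the key step. The paper norms this space directly by $\sup_{\Lip(f)\le1}\abs{\int f\,d\eta}$, so duality yields the isometry at once. Your Arens--Eells infimum norm instead costs the extra coupling inequality, the pairing bound and a positivity check, but in exchange your flow-decomposition remark gives a proof that avoids duality altogether.
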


\begin{proof}
Let $\mathcal M^0_{\mathrm{fin}}(X)$ be the vector space of finitely
supported signed measures of total mass zero, with the
Kantorovich--Rubinstein norm
\[
\|\eta\|_{\mathrm{KR}}
=\sup_{\Lip(f)\le1}\left|\int_X f\,d\eta\right|.
\]
Put $S=\supp\mu\cup\supp\nu$.
Every $1$-Lipschitz function $f\colon S\to\mathbb R$ extends to a $1$-Lipschitz function on $X$ by the McShane--Whitney theorem. 
Thus the supremum defining $\|\mu-\nu\|_{\mathrm{KR}}$ is unchanged if it is taken over $1$-Lipschitz functions on $S$. Kantorovich--Rubinstein duality
\cite[I.5, Case~5.16]{Villani}, applied to the finite metric space $S$, therefore gives
$\|\mu-\nu\|_{\mathrm{KR}}=W_1(\mu,\nu)$.

Fixing $o\in X$, the map
$\mu\mapsto\mu-\delta_o$ is an isometric affine embedding
of $\Pfin(X)$, with convex image.
\end{proof}

The completion of $\mathcal M^0_{\mathrm{fin}}(X)$ is the Arens--Eells
space of $X$, also known as its Lipschitz-free space; see~\cite{Weaver}.

\begin{theorem}\label{thm:barycenter}
If a metric space $X$ admits a $K$-Lipschitz barycenter on $\Pfin(X)$,
then, for every $1\le k<n$, there is a retraction
$r\colon X(n)\to X(k)$ such that
$\Lip(r)\le K B_{n,k}$, with $B_{n, k}$ as in~\eqref{eq:combined}. 
It can be chosen to satisfy
\begin{equation}\label{eq:barycentric-hull}
 r(A)\subset
 \{\beta(\mu):\mu\in\Pfin(X),\ \supp\mu\subset A\}.
\end{equation} 
\end{theorem}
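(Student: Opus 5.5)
The plan is to transport the problem into the convex set $Y=\Pfin(X)$, which by Lemma~\ref{lem:probability-convex} is isometric to a convex subset of a normed space. Corollary~\ref{cor:convex} then supplies a $B_{n,k}$-Lipschitz retraction $r_{n,k}\colon Y(n)\to Y(k)$ with output in the convex hull of the input. We define
\[
 r(A)=\beta\Bigl(r_{n,k}\bigl(\{\delta_a:a\in A\}\bigr)\Bigr),
\]
meaning the image under $\beta$ of each of the at most $k$ measures in $r_{n,k}(\delta(A))$, where $\delta(A)=\{\delta_a:a\in A\}$.

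The verification proceeds in four steps.

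\emph{Step 1: the Dirac embedding is isometric.} The map $x\mapsto\delta_x$ is an isometry $X\to Y$, since $W_1(\delta_x,\delta_y)=d(x,y)$ (the only coupling is $\delta_{(x,y)}$). Hence $A\mapsto\delta(A)$ is an isometry $X(n)\to Y(n)$ for the Hausdorff metrics.

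\emph{Step 2: the output is finitely supported on $A$.} The convex hull of $\delta(A)$ inside the convex image of $\Pfin(X)$ consists exactly of the measures $\sum_a t_a\delta_a$ with $t_a\ge0$ and $\sum_a t_a=1$. This uses the fact that the embedding $\mu\mapsto\mu-\delta_o$ is affine. So every element of $r_{n,k}(\delta(A))$ is a probability measure supported in $A$, and applying $\beta$ gives \eqref{eq:barycentric-hull}. The set $r(A)$ has at most $k$ points, because $\beta$ is applied to at most $k$ measures.

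\emph{Step 3: $r$ is a retraction.} If $|A|\le k$, then $r_{n,k}(\delta(A))=\delta(A)$, and $\beta(\delta_a)=a$ gives $r(A)=A$.

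\emph{Step 4: the Lipschitz bound.} A $K$-Lipschitz map between metric spaces induces a $K$-Lipschitz map on finite subsets with the Hausdorff metric. Indeed, if every $\mu\in M$ is within $\rho$ of some $\nu\in N$, then $\beta(\mu)$ is within $K\rho$ of $\beta(\nu)$. Composing the three maps gives
\[
 d_H(r(A),r(B))\le K\,d_H\bigl(r_{n,k}(\delta(A)),r_{n,k}(\delta(B))\bigr)\le KB_{n,k}\,d_H(A,B).
\]

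For the second assertion of Theorem~\ref{thm:barycenter-intro}, one must produce a $1$-Lipschitz barycenter on complete spaces with a conical geodesic bicombing. Here I would cite known constructions: Navas-type or Descombes barycenters for conical bicombings, or the Es-Sahib--Heinich and Sturm barycenters for Busemann and Hadamard spaces, which are $1$-Lipschitz with respect to $W_1$.

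The main obstacle is confirming that the cited barycenter is defined on all of $\Pfin(X)$, including non-uniform weights, and is genuinely $1$-Lipschitz for $W_1$. If a reference only handles uniform or rational weights, I would extend by density and continuity, using completeness of $X$ for the limits. The reduction itself is formal once Corollary~\ref{cor:convex} and Lemma~\ref{lem:probability-convex} are in hand.
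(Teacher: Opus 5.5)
Your proposal is correct and matches the paper's proof step for step. The paper also sets $r(A)=\beta(S(\iota(A)))$, where $S$ is the convex-hull-preserving $B_{n,k}$-Lipschitz retraction on $\Pfin(X)$ from Lemma~\ref{lem:probability-convex} and Corollary~\ref{cor:convex}, and it verifies cardinality, the retraction property, the Lipschitz bound by composition, and \eqref{eq:barycentric-hull} via convex combinations of Dirac measures. The "main obstacle" you flag about conical bicombings lies outside this statement; the paper handles it in Corollary~\ref{cor:conical} by citing Basso's $1$-Lipschitz barycenter on all finite-first-moment Radon measures, which already covers arbitrary weights in $\Pfin(X)$.
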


\begin{proof}
By Lemma~\ref{lem:probability-convex} and Corollary~\ref{cor:convex},
there is a retraction
$S\colon \Pfin(X)(n)\to\Pfin(X)(k)$ with $\Lip(S)\le B_{n,k}$, 
whose output lies in the convex hull of its input. The map
$\iota\colon X(n)\to\Pfin(X)(n)$ given by $\iota(A)=\{\delta_x:x\in A\}$ 
is isometric, because $W_1(\delta_x,\delta_y)=d(x,y)$.
For $A\in X(n)$, define
$r(A)= \beta(S(\iota(A)))$. By construction, $r(A)$ has at most $k$ points. If $|A|\le k$, then
$S(\iota(A))=\iota(A)$ and $r(A)=A$.
The Lipschitz estimate follows by composition.
Every measure in $S(\iota(A))$ is a convex combination of the
Dirac measures at points of $A$, proving~\eqref{eq:barycentric-hull}.
\end{proof}

\begin{remark} \label{rem:two-points}
If \(X\) admits a \(K\)-Lipschitz barycenter $\beta$, then the map
\(r(\{x,y\})=\{\beta((\delta_x+\delta_y)/2)\}\) is easily seen to be a $K$-Lipschitz retraction $X(2)\to X(1)$. For the usual vector barycenter in a normed
space, this is the midpoint retraction.
\end{remark}

\begin{corollary} \label{cor:wasserstein}
For every metric space $M$, the space $(\Pfin(M),W_1)$ satisfies
$\Lambda_{n,k}\le B_{n,k}$ for all $1\le k<n$. 
If $M$ is Polish, the same conclusions hold
for the space $(\mathcal P_1(M),W_1)$ of Borel probability measures
with finite first moment.
\end{corollary}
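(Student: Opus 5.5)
The first assertion follows from Theorem~\ref{thm:barycenter} once we produce a $1$-Lipschitz barycenter on $M'=(\Pfin(M),W_1)$. The natural candidate is the \emph{flattening} map $\beta\colon\Pfin(\Pfin(M))\to\Pfin(M)$, defined by $\beta(\sum_i t_i\delta_{\mu_i})=\sum_i t_i\mu_i$. An alternative route avoids barycenters: Lemma~\ref{lem:probability-convex} shows that $\Pfin(M)$ is isometric to a convex subset of a normed space, and Corollary~\ref{cor:convex} then applies directly. This already gives $\Lambda_{n,k}(\Pfin(M))\le B_{n,k}$. I will use this second route for the finitely supported case, since it needs no new estimate.

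For the Polish case I plan to use the barycenter route, with $\mathcal P_1(M)$ in place of $\Pfin(M)$. The key step is a $1$-Lipschitz barycenter $\beta\colon\Pfin(\mathcal P_1(M))\to\mathcal P_1(M)$ given by flattening, $\beta(\sum_i t_i\delta_{\mu_i})=\sum_i t_i\mu_i$. Clearly $\beta(\delta_\mu)=\mu$. For the Lipschitz bound, take two finitely supported measures $\sum_i s_i\delta_{\mu_i}$ and $\sum_j t_j\delta_{\nu_j}$. Let $(\pi_{ij})$ be an optimal coupling of the weights, so that $W_1=\sum\pi_{ij}W_1(\mu_i,\nu_j)$. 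Since $W_1$ on $\mathcal P_1(M)$ is jointly convex, we get
\[
W_1\Bigl(\sum_{i,j}\pi_{ij}\mu_i,\sum_{i,j}\pi_{ij}\nu_j\Bigr)\le\sum_{i,j}\pi_{ij}W_1(\mu_i,\nu_j).
\]
Joint convexity itself comes from gluing: take near-optimal couplings $\gamma_{ij}$ of $\mu_i,\nu_j$, which exist because $M$ is Polish, and form $\sum\pi_{ij}\gamma_{ij}$. This shows $\beta$ is $1$-Lipschitz, and Theorem~\ref{thm:barycenter} with $K=1$ gives $\Lambda_{n,k}(\mathcal P_1(M))\le B_{n,k}$.

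There is a cleaner alternative that mirrors the first route. Kantorovich--Rubinstein duality on Polish spaces gives $W_1(\mu,\nu)=\|\mu-\nu\|_{\mathrm{KR}}$ on $\mathcal P_1(M)$. Hence $\mu\mapsto\mu-\delta_o$ embeds $\mathcal P_1(M)$ isometrically as a convex subset of the normed space of finite signed measures with finite first moment and total mass zero, and Corollary~\ref{cor:convex} applies. I regard the only real obstacle as justifying the duality, or the existence of optimal couplings, for general Polish $M$. Here I would cite \cite{Villani}. Everything else is formal composition with results already proved.
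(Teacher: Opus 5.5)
Your proof is correct. The first assertion is handled exactly as in the paper. Your ``cleaner alternative'' for the Polish case is precisely the paper's argument: the Kantorovich--Rubinstein norm on mass-zero signed measures with finite first moment, then the duality from \cite{Villani} to make $\mu\mapsto\mu-\delta_o$ an isometry with convex image, then Corollary~\ref{cor:convex}.

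Your primary route is genuinely different. In effect you apply the finitely supported case to the metric space $\mathcal P_1(M)$ itself, which is what happens inside Theorem~\ref{thm:barycenter}. You then push the result forward by the affine flattening map, whose $1$-Lipschitz property comes from mixing couplings. This replaces the full duality theorem on $\mathcal P_1(M)$ with an elementary estimate. By \eqref{eq:barycentric-hull}, it still produces outputs that are convex combinations of the input measures. The paper's route is shorter given the citation, and it yields the stronger structural fact that $\mathcal P_1(M)$ is itself isometric to a convex subset of a normed space.

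One correction to how you place the difficulties. Near-optimal couplings $\gamma_{ij}$ exist simply by the definition of the infimum, since the product coupling has finite cost. So Polishness is not needed there, and your primary route needs neither duality nor optimal couplings on $M$. The optimal coupling $(\pi_{ij})$ of the weights is a finite linear program. Polishness does enter in knowing that $W_1$ is a genuine metric on $\mathcal P_1(M)$, notably the triangle inequality via gluing. Theorem~\ref{thm:barycenter} needs this because it is stated for metric spaces.
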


\begin{proof}
The first assertion follows directly from
Lemma~\ref{lem:probability-convex} and Corollary~\ref{cor:convex}.
For the second, use the vector space of finite signed Borel measures
$\eta$ satisfying
\[
\eta(M)=0,\qquad
\int_M d(o,x)\,d|\eta|(x)<\infty,
\]
with the same Kantorovich--Rubinstein norm.
The duality formula for $\mathcal P_1(M)$~\cite[I.5, Case~5.16]{Villani} shows that
$\mu\mapsto\mu-\delta_o$ is an isometry with convex image.
Apply Corollary~\ref{cor:convex} again.
\end{proof}

For example, $(\mathcal P_1(S^1),W_1)$ has the Lipschitz clustering property, although $S^1$ itself
does not have it~\cite{KovalevLCP}. 

\begin{definition}\label{def:conical} ~\cite{Basso, DescombesLang} 
A \emph{conical geodesic bicombing} on a metric space $X$ assigns to
each ordered pair $x,y\in X$ a constant-speed geodesic
$\sigma_{xy}\colon [0,1]\to X$ from $x$ to $y$, such that
\begin{equation}\label{eq:conical}
d(\sigma_{xy}(t),\sigma_{x'y'}(t))
\le(1-t)d(x,x')+td(y,y')
\qquad(0\le t\le1).
\end{equation}
A subset of $X$ is \emph{$\sigma$-convex} if it contains the selected
geodesic between each pair of its points, and
$\overline{\conv}_{\sigma} A$ denotes the smallest closed
$\sigma$-convex subset containing $A$.
\end{definition}

If $X$ is complete and admits a conical bicombing $\sigma$, then it admits a
$1$-Lipschitz barycenter $\beta$ on $\Pfin(X)$ with
$\beta(\mu)\in\overline{\conv}_{\sigma}(\supp\mu)$. This is the finite-support part of \cite[Theorems 2.6 and 2.7]{Basso}, where the barycenter is constructed
on the space of Radon probability measures with finite first moment.

\begin{corollary}\label{cor:conical}
Every complete metric space \(X\) with a conical geodesic bicombing
satisfies $\Lambda_{n,k}(X)\le B_{n,k}$ for all $1\le k<n$.  
The retractions can be chosen so that
$r(A)\subset\overline{\conv}_{\sigma} A$.
In particular, this applies to every complete Busemann-convex space,
and hence to every Hadamard space. 
\end{corollary}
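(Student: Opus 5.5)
The plan is to deduce Corollary~\ref{cor:conical} from Theorem~\ref{thm:barycenter} with $K=1$, using the barycenter supplied by Basso's theorem quoted just before the statement. Since $X$ is complete and carries a conical bicombing $\sigma$, there is a map $\beta\colon\Pfin(X)\to X$ with $\beta(\delta_x)=x$ and $d(\beta(\mu),\beta(\nu))\le W_1(\mu,\nu)$. Moreover, $\beta(\mu)\in\overline{\conv}_\sigma(\supp\mu)$. Thus $\beta$ is a $1$-Lipschitz barycenter in the sense of Definition~\ref{def:barycenter}. Theorem~\ref{thm:barycenter} then gives a retraction $r\colon X(n)\to X(k)$ with $\Lip(r)\le B_{n,k}$, hence $\Lambda_{n,k}(X)\le B_{n,k}$.

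For the inclusion $r(A)\subset\overline{\conv}_\sigma A$, I will use \eqref{eq:barycentric-hull}. Every point of $r(A)$ has the form $\beta(\mu)$ with $\supp\mu\subset A$. Hence it lies in $\overline{\conv}_\sigma(\supp\mu)$. This set is contained in $\overline{\conv}_\sigma A$, because the latter is a closed $\sigma$-convex set containing $\supp\mu$, and $\overline{\conv}_\sigma(\supp\mu)$ is the smallest such set.

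For the final sentence, I will check that complete Busemann-convex spaces carry a conical bicombing. Such a space is uniquely geodesic, so I let $\sigma_{xy}$ be the unique constant-speed geodesic from $x$ to $y$. Busemann convexity says that $t\mapsto d(\sigma_{xy}(t),\sigma_{x'y'}(t))$ is convex. Comparing its values at $t=0$ and $t=1$ gives exactly \eqref{eq:conical}. Hadamard spaces are complete CAT(0) spaces, and CAT(0) spaces are Busemann convex by the standard comparison argument, so they are covered as well.

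There is no real obstacle in this argument. The only points needing care are matching the form of Basso's barycenter theorem to Definition~\ref{def:barycenter}, namely the normalization on Dirac measures and the Lipschitz bound for $W_1$ on finitely supported measures, and recalling the standard implications CAT(0) $\Rightarrow$ Busemann convex $\Rightarrow$ conical bicombing.
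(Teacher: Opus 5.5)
Your proposal is correct and follows the paper's proof. You apply Theorem~\ref{thm:barycenter} with Basso's $1$-Lipschitz barycenter, deduce $r(A)\subset\overline{\conv}_\sigma A$ from \eqref{eq:barycentric-hull} together with $\beta(\mu)\in\overline{\conv}_\sigma(\supp\mu)$, and get \eqref{eq:conical} in Busemann-convex spaces from convexity of $t\mapsto d(\sigma_{xy}(t),\sigma_{x'y'}(t))$. The paper additionally cites Remark~\ref{rem:two-points} for the two-point constant, but your argument correctly does not need it, since $B_{2,1}=1$ is already built into Theorem~\ref{thm:barycenter}.
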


\begin{proof}
Apply Theorem~\ref{thm:barycenter} to the barycenter just described.
Its   property $\beta(\mu)\in\overline{\conv}_{\sigma}(\supp\mu)$ gives the stated
containment, and Remark~\ref{rem:two-points} gives the two-point constant.
In a Busemann-convex geodesic space, the distance between
two constant-speed geodesics is convex, so its unique geodesics
satisfy~\eqref{eq:conical}.  
\end{proof}

Theorem~\ref{thm:barycenter}, Remark~\ref{rem:two-points}, and
Corollary~\ref{cor:conical} prove Theorem~\ref{thm:barycenter-intro}.

\section{Absolute retracts, lower bounds, and open questions}\label{sec:alr}

The existence of retractions between finite subset spaces is inherited by Lipschitz retracts.

\begin{lemma}[{\cite[Lemma 3.3]{AkoforKovalev}}]\label{lem:retract-transfer}
Let $g\colon X\to Z$ and $f\colon Z\to X$ be Lipschitz maps with
$f\circ g=\operatorname{id}_X$. Then
\[
\Lambda_{n,k}(X)\le\Lip(f)\Lip(g)\,\Lambda_{n,k}(Z)\qquad(1\le k<n).
\]
\end{lemma}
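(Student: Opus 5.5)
The plan is to conjugate a retraction on $Z$ by the maps $g$ and $f$. Fix $1\le k<n$. We may assume $\Lambda_{n,k}(Z)<\infty$, since otherwise there is nothing to prove. Fix $\epsilon>0$ and a retraction $\rho\colon Z(n)\to Z(k)$ with $\Lip(\rho)\le\Lambda_{n,k}(Z)+\epsilon$.

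Any map $h\colon X\to Y$ between metric spaces induces the map $h_*(A)=h(A)=\{h(a):a\in A\}$. This map sends $X(m)$ into $Y(m)$ for every $m$, because images have no more points than $A$. It also satisfies $\Lip(h_*)\le\Lip(h)$ for the Hausdorff metric: if $d_H(A,B)\le\varepsilon$, then every $h(a)$ lies within $\Lip(h)\varepsilon$ of some $h(b)$, and symmetrically. The construction also preserves composition, $(f\circ g)_*=f_*\circ g_*$.

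Define
\[
 r=f_*\circ\rho\circ g_*\colon X(n)\to X(k).
\]
This is well defined: $g_*(A)\in Z(n)$, then $\rho(g_*(A))\in Z(k)$, and $f_*$ sends $Z(k)$ into $X(k)$. For the retraction property, take $|A|\le k$. Then $|g(A)|\le k$, so $\rho$ fixes $g(A)$, and
\[
 r(A)=f(g(A))=A
\]
because $f\circ g=\operatorname{id}_X$.

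Composing the Lipschitz bounds gives
\[
 \Lip(r)\le\Lip(f)\bigl(\Lambda_{n,k}(Z)+\epsilon\bigr)\Lip(g).
\]
Letting $\epsilon\to0$ yields the claim. There is no real obstacle here. The only points to check are the Hausdorff Lipschitz bound for induced maps and the fact that cardinality does not increase under images; both are routine.
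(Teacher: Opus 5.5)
Your proposal is correct and is the paper's argument: conjugate a (near-optimal) retraction $\rho\colon Z(n)\to Z(k)$ by the pointwise-induced maps, $r=f_*\circ\rho\circ g_*$, and compose the Lipschitz bounds. Your $\epsilon$-approximation of the infimum and your check that induced maps are $\Lip(h)$-Lipschitz for $d_H$ and do not increase cardinality only spell out details the paper leaves implicit.
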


\begin{proof}
Apply maps to finite sets pointwise. If $R\colon Z(n)\to Z(k)$ is a
retraction, then $f\circ R\circ g$ is a retraction $X(n)\to X(k)$, and
its Lipschitz constant is at most $\Lip(f)\Lip(R)\Lip(g)$.
\end{proof}

\begin{definition}\label{def:alr}
A metric space $X$ is an \emph{absolute $\lambda$-Lipschitz retract} if
every isometric copy of $X$ in a metric space is the image of a
$\lambda$-Lipschitz retraction. 
\end{definition}

For example, $\ell^\infty(\Gamma)$ is an absolute $1$-Lipschitz retract.

\begin{corollary}\label{cor:alr}
Every absolute $\lambda$-Lipschitz retract $X$ satisfies
$\Lambda_{n,k}(X)\le\lambda(2n-1)$ for $1\le k<n$.
\end{corollary}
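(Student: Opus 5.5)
The plan is to combine Lemma~\ref{lem:retract-transfer} with Theorem~\ref{thm:direct-infinity}, using $Z=\ell^\infty(\Gamma)$ for a suitable index set $\Gamma$.

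First, embed $X$ isometrically into $\ell^\infty(\Gamma)$. The Kuratowski--Fr\'echet embedding does this with $\Gamma=X$: fix a base point $x_0\in X$ and set
\[
g(x)(y)=d(x,y)-d(x_0,y).
\]
Each $g(x)$ is bounded by $d(x,x_0)$, and $\norm{g(x)-g(x')}_\infty=d(x,x')$, where the supremum is attained at $y=x$. If $X$ is empty there is nothing to prove, so we may take $X$ nonempty. Thus $g\colon X\to Z$ is an isometry and $\Lip(g)=1$.

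Next, since $X$ is an absolute $\lambda$-Lipschitz retract and $g(X)$ is an isometric copy of $X$ inside the metric space $Z$, Definition~\ref{def:alr} provides a $\lambda$-Lipschitz retraction $\rho\colon Z\to g(X)$. Put $f=g^{-1}\circ\rho\colon Z\to X$. Because $g^{-1}$ is an isometry on $g(X)$, we have $\Lip(f)\le\lambda$, and $f\circ g=\operatorname{id}_X$ since $\rho$ fixes $g(X)$ pointwise.

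Finally, Theorem~\ref{thm:direct-infinity} gives $\Lambda_{n,k}(Z)\le 2n-1$ for $1\le k<n$. Lemma~\ref{lem:retract-transfer} then yields
\[
\Lambda_{n,k}(X)\le\Lip(f)\Lip(g)\,\Lambda_{n,k}(Z)\le\lambda(2n-1).
\]

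I expect no real obstacle here. The only point to check is that the embedding lands in $\ell^\infty(\Gamma)$ for some nonempty index set, so that Theorem~\ref{thm:direct-infinity} applies. The base-point normalization takes care of this even when $X$ is unbounded.
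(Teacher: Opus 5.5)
Your proposal is correct and follows the paper's proof exactly: the same base-pointed Kuratowski--Fr\'echet embedding into $\ell^\infty(X)$, the $\lambda$-Lipschitz retraction supplied by the absolute retract property, and then Theorem~\ref{thm:direct-infinity} combined with Lemma~\ref{lem:retract-transfer}. Your extra checks (that the embedded functions are bounded and that the index set is nonempty) are valid and only make the argument more explicit.
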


\begin{proof}
For a fixed $o\in X$, the map $x\mapsto d(x,\cdot)-d(o,\cdot)$ embeds $X$
isometrically into $\ell^\infty(X)$, so $X$ is a $\lambda$-Lipschitz
retract of $\ell^\infty(X)$. Apply Theorem~\ref{thm:direct-infinity} and
Lemma~\ref{lem:retract-transfer}.
\end{proof}

The constants in Theorems~\ref{thm:direct}, \ref{thm:main},
and~\ref{thm:direct-infinity} grow with $n$, and some growth is
unavoidable.

\begin{proposition}\label{prop:lower}
Let $X$ contain an isometric copy of a nondegenerate interval.
Then $\Lambda_n(X)\ge n-3$ for all $n\ge2$.
\end{proposition}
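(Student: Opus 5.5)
The plan is to reduce to the real line. If $\iota\colon[0,\ell]\to X$ is an isometric embedding, then $[0,\ell]$ is a $1$-Lipschitz retract of $X$: compose the McShane extension of $\iota^{-1}\colon\iota([0,\ell])\to\mathbb R$ with the truncation $t\mapsto\min\{\max\{t,0\},\ell\}$, and call the result $f$. Lemma~\ref{lem:retract-transfer} with $g=\iota$ then gives $\Lambda_n([0,\ell])\le\Lambda_n(X)$. Rescaling does not change Lipschitz constants of retractions of finite subset spaces, so it suffices to show $\Lambda_n([0,1])\ge n-3$. I would invoke the lower bound of \cite[Theorem 5.1]{AkoforKovalev}, which is presumably stated for the line or an interval. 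In case it is stated only for $\mathbb R$, I would sketch a direct argument that works inside a bounded interval.

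For the direct argument, take a retraction $r\colon[0,1](n)\to[0,1](n-1)$ with $\Lip(r)=L$. Use configurations of $n$ points at roughly equal spacing $h$, and deform them continuously by moving points one at a time. The output of $r$ is a set of at most $n-1$ points. Along the deformation, one can track which pair of input points is being ``merged'', and by continuity this assignment is locally constant while the pairwise gaps stay comparable to $h$.

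The topological core is to find a loop of $n$-point configurations that is mapped to itself by a cyclic relabeling. For example, take equally spaced points, translate the whole configuration by one spacing $h$, and then move the last point back to the front. Each move changes the configuration by at most about $h$ in $d_H$ at each stage. The merged pair must then ``travel'' along the configuration, so at some stage the output must be at Hausdorff distance about $(n-3)h/2$ from the input on one side while the input moved only about $h/2$.

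The cleaner version, which I expect is the route of \cite{AkoforKovalev}, uses two configurations. Let $A$ be $n$ equally spaced points and $A'$ a small perturbation where the closest pair is at one end versus the other. Every set of $n-1$ points within $\rho$ of $A$ must merge some adjacent pair. A chain of $n-2$ intermediate configurations, each at distance $\varepsilon$ from the next, forces the merge location to jump across the configuration. Comparing the endpoints then gives $d_H(r(A),r(A'))\ge(n-3)\cdot$spacing while $d_H(A,A')\le$spacing. This yields $L\ge n-3$.

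The main obstacle is making this rigidity argument quantitative and rigorous, in particular producing exactly the constant $n-3$. I would simply cite \cite[Theorem 5.1]{AkoforKovalev} for $\mathbb R$ (or an interval) and present the reduction via Lemma~\ref{lem:retract-transfer} as the proof, with the trivial cases $n\le3$ holding automatically.
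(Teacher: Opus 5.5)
Your proposal is correct and matches the paper's proof. Both cite \cite[Theorem 5.1]{AkoforKovalev} for an interval $[0,L]$ and then transfer the bound to $X$ via Lemma~\ref{lem:retract-transfer}, taking $g=\iota$ and a $1$-Lipschitz left inverse $f$. The only difference is cosmetic: the paper uses the explicit left inverse $f(x)=\min\{\ell,d(x,\iota(0))\}$ instead of a truncated McShane extension, and since the cited theorem is already stated for an interval, your heuristic ``direct argument'' paragraphs (not rigorous as written) can simply be dropped.
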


\begin{proof}
For an interval $[0,L]$, this is~\cite[Theorem 5.1]{AkoforKovalev}. Let
$c\colon[0,L]\to X$ be an isometry. The map $f(x)=\min\{L,d(x,c(0))\}$ is
$1$-Lipschitz and satisfies $f\circ c=\operatorname{id}$, so
Lemma~\ref{lem:retract-transfer} gives
$\Lambda_n([0,L])\le\Lambda_n(X)$.
\end{proof}

\begin{question}\label{q:linear}
Is $\Lambda_n(X)=O(n)$ for every normed space $X$?
\end{question}

\begin{question}\label{q:uniform}
Is $\sup_{1\le k<n}\Lambda_{n,k}(X)=O(n)$ for every normed space $X$?
\end{question}

For normed spaces of dimension at least two, the lower bound also grows
with the size of the target: $\Lambda_{n,k}(X)\ge k/(2\pi)-1/2$
by~\cite[Theorem 1.1]{AkoforKovalev}.

\section*{AI use disclosure}

The author used ChatGPT (OpenAI) and Claude (Anthropic)
to assist with mathematical exploration and manuscript preparation.
The author reviewed and verified the mathematical arguments,
revised the manuscript, and takes full responsibility for its content.

\bibliographystyle{plain}
\bibliography{references_1}

\end{document}